 \newcommand{\R}{\mathbb R}
\newcommand{\C}{\mathbb C}
\newtheorem{theorem}{Theorem}[section]
\newtheorem{proposition}[theorem]{Proposition}
\newtheorem{lemma}[theorem]{Lemma}
\newtheorem{corollary}[theorem]{Corollary}
\theoremstyle{definition}
\theoremstyle{remark}
\newtheorem{remark}[theorem]{Remark}
\numberwithin{equation}{section}
\begin{document}

\title[Potential theory associated with the Dunkl Laplacian]{Potential theory associated with the Dunkl Laplacian}

\author[K. Hassine]{Kods Hassine}
\address[Kods Hassine]{Faculty of sciences, Monastir university, Tunisia}
\email{hassinekods@gmail.com}

\maketitle

\begin{abstract}
 The main goal of this paper is to give potential theoretical approach to study the Dunkl Laplacian $\Delta_k$ which is a standard example of differential-difference operators. By introducing the Green kernel relative to $\Delta_k$, we prove that the Dunkl Laplacian generates a balayage space and  we investigate the associated family of harmonic measures. Therefore, by mean of harmonic kernels, we give a characterization of all $\Delta_k$-harmonic functions on large class of  open subsets  $U$ of $\R^d$. We also establish existence and uniqueness result of a solution of the corresponding Dirichlet problem.

\textbf{Keywords:}  Dunkl Laplacian, Balayage space, Green kernel, Harmonic measures, Dirichlet problem.  \\
\textbf{MSC(2010):}  Primary: 31B05; Secondary: 31C40, 35J08.
\end{abstract}

\section{\bf Introduction}
The purpose of this paper is to develop potential theoretic approach to study
a differential-difference operator namely the Dunkl laplacian $\Delta_k$. Roughly speaking,
the Dunkl Laplacian is a perturbation of the usual Laplacian by term
with differences associated with a finite reflection group $W$ and a multiplicity
function $k$. The operator $\Delta_k$ was introduced by C.F.Dunkl in \cite{dunkl2} in order to
construct orthogonal polynomials and spherical harmonics for measure invariant
under the action of a finite reflection group. So the study of the Dunkl
Laplacian was initially in harmonic analysis. Thereafter, M.Voit and M.R\"{o}sler
showed in \cite{rv} that $\Delta_k$ generates a strong Feller semi-group $(P^k_t)_t$ which reduces
to the classical Brownian semi-group in the case where the function $k$ is
identically vanishing. This fact led several authors to define and develop many
Dunkl theoretic concepts probabilistically, by studying the so called Dunkl
process(see \cite{rv,chyb}).
\par Using the Dunkl semi-group$(P^k_t )_t$, we prove in this paper that $\Delta_k$ generates a balayage space. More precisely we introduce  the set $E_{\Delta_k}$  of all excessive functions relative to $(P_t^k)_t$ and we prove that $\R^d$ together with $E_{\Delta_k}$ form a balayage space.
Notice that the notion of  Balayage spaces provides a potential theory which is as rich as that
of harmonic spaces and it covers  large classes of linear elliptic and parabolic partial differential operators  as well as Riesz potentials, Markov chains on discrete spaces and integro-differential operators. Unlike harmonic spaces, it is well known that in balayage spaces, harmonic measures  for  an open set $U$ may live on the entire complement $U^c$ of $U$ instead of being concentrated on the boundary $\partial U$. For our  balayage space $(\R^d, E_{\Delta_k})$ we prove that the associated harmonic measures are  with compact support on the complement.  Moreover, we establish  a  correspondence between harmonic measures and $\Delta_k$-harmonic functions (i.e $C^2$-functions satisfying $\Delta_ku=0$). This correspondence allows us to investigate the existence of a solution  $u\in C^2\cap C(\overline U)$ to the Dirichlet probelm
\begin{equation}\label{pbd}\left\{\begin{array}{rcl}
\Delta_ku&=&0\quad \textrm{ on~} U,\\
u&=&f\quad\textrm{ on~}\partial U,
\end{array}\right.
\end{equation}
for large class of open sets $U$ and  continuous functions $f$.
\par The main tool used in our approach is the Green kernel $G^k$ which is defined by the integral of $P_t^k$ with respect to $t$.
Among the important properties of $G^k$, we shall prove that for every open  Borel bounded function $f$ with compact support on $\R^d$ the function  $G^kf$  is continuous  on $\R^d$, vanishing at infinity and satisfies  $\Delta_k G^kf=-f$ in the distributional
sense (see Theorem~\ref{distr}). Furthermore we show that  for every Borel non negative function $f$, the function $G^kf$ is excessive. By studying excessive functions, we prove  that the couple $(\R^d, E_{\Delta_k})$ is a balayage space. This fact allows us to introduce the corresponding  family of harmonic kernel $(H_V)_V$.  Combining  properties of the Green operator $G^k$ and results known for standard balayage spaces, we prove that the harmonic measure $H_V$ relative to a bounded  open set $V$ is concentrated in the closure  $\overline {^W\!\!V}$ of the set
$$^W\!\!V := \bigcup_{w\in W}w(V).$$
In particular, if $V$ is  $W$-invariant (i.e. $^W\!\!V=V$), then the harmonic measure relative to $V$ is  supported by the boundary $\partial V$. By mean of  harmonic measure, we establish in  the last section of this paper  a characterisation of  $\Delta_k$-harmonic functions. More precisely, we prove that a continuous function $f$ on a bounded $W$-invariant open set $V$ is $\Delta_k$-harmonic on $V$ if and only if $H_Uf=f$ for every open set $U$ such that $\overline U\subset V$. This characterizations leads to prove that Problem~(\ref{pbd}) admits a unique solution provided $U$ is bounded $W$-invariant and regular.

\par The paper is organized as follows: Basic notions and results on
Dunkl theory are collected in Section 2. These concern in particular the Dunkl Laplacian, the Dunkl kernel and the Dunkl translation. Section 3 is devoted to the study of the Green kernel $G^k$. In Section 4, we give a minimum principle which will be used not only to prove the uniqueness but also the existence of a solution to Problem~(\ref{pbd}). In Section 5 we study excessive functions and we prove that  $(\R^d, E_{\Delta_k})$ is a balayage space. By introducing the corresponding  family of harmonic kernel $(H_V)_V$ we give in Section 6  a characterisation of  $\Delta_k$-harmonic functions in $W$-invariant open sets and we  investigate  the Dirichlet problem~(\ref{pbd}).

\section{Preliminary}
For every subset  $F$ of $\R^d$, let $\mathcal{B}(F)$ be the set of all Borel measurable functions on $F$. Let ${C}(F)$ be the set of all continuous real-valued functions on $F$. We denote by $C_0(\R^d)$ the set of all functions $f\in C(\R^d)$ satisfying $\lim_{|x|\to\infty}f(x)=0$. For every open subset $U$ of $\R^d$, we denote by  $\mathcal{C}_c^\infty(U)$  the set of all infinitely differentiable functions on $U$ with compact support. If $\mathcal {G}$ is a set of numerical
  functions then $\mathcal{G}^+$ (respectively $\mathcal{G}_b$) will denote the class of all functions in $\mathcal{G}$ which
  are nonnegative (respectively bounded). For every open subset $V$ of $\R^d$, we shall write $U\Subset V$ when $U$ is a bounded open set such that $\overline U\subset V$.
\par For every $\alpha\in \R^d$, we denote by $\sigma_\alpha$ the reflection in the hyperplane orthogonal
to $\alpha$. It is given by
$$\sigma_{\alpha}(x):=x-2\frac{\langle x,\alpha\rangle}{|\alpha|^2}\alpha,$$
where $\langle\cdot ,\cdot\rangle$  is the usual inner product on~$\R^d$ and $|\alpha|:=\langle \alpha,\alpha \rangle$.
Let $R$ be a
root system, i.e. a finite subset $R$ of $\R^d\setminus \{0\}$ such that $R\cap \R\alpha = \{\pm\alpha\}$.We shall
denote by $W$ the finite reflection group generated by $\{\sigma_\alpha, \alpha\in R\}$.
Let $k : R \rightarrow \R_+$ be a multiplicity function, i.e $k(w\alpha) = k(\alpha)$ for all $w\in W$ and $\alpha\in R$.
Let $V$ $W$-invariant open set, that is  $w(V) \subset V$ for all $w\in W$. The Dunkl Laplacian $\Delta_k$ is given by
\begin{equation}\label{lapd}
\Delta_kf(x)=\Delta f(x)+\sum_{\alpha\in R}k(\alpha)\left(\frac{\langle\nabla f(x),\alpha\rangle}{\langle\alpha,x\rangle}-\frac{|\alpha|^2}{2}\frac{f(x)-f(\sigma_\alpha(x))}{\langle\alpha,x\rangle^2}\right),
\end{equation}
where $\Delta$ and $\nabla$ denote respectively the usual Laplacian and gradient on $\R^d$.
 A function $f:V\rightarrow \R$ is said to be $\Delta_k$-harmonic on $V$ if $f\in C^2(V)$ and $\Delta_kf=0$ on $V$.
The operator $\Delta_k$ has the following  symmetry property: For   $f\in C^2(V)$ and $\varphi\in C_c^2(V)$
\begin{equation}\label{ti}
\int_{\R^d} \Delta_k f(x) \varphi(x) w_k(x)\,dx=\int_{\R^d} f(x)\Delta_k\varphi(x)w_k(x)\,dx,
\end{equation}
where $w_k$ is the homogeneous weight function defined on~$\R^d$ by
$$
w_k(x)=\prod_{\alpha\in
R}|\langle x,\alpha\rangle|^{k(\alpha)}.
$$
It was proved in \cite{mt2} (see also \cite{kods}) that the operator $\Delta_k$ is hypoelliptic in $W$-invariant open sets, i.e., if $f$ is a continuous function on a $W$-invariant open set $V$ and satisfies
$$\int_{V}f(y)\Delta_k\varphi(y)w_k(y)dy=0\mbox{ for every }\varphi\in C^\infty_c(V)$$
then $f$ is infinitely differentiable on $V$.
 \par According to \cite{dunkl1}, there exists a unique linear isomorphism $V_k$  from the space of homogenous polynomials of degree $n$
on~$\mathbb R^d$ into it self such that  $V_k1=1$ and
$\Delta_k V_k=V_k\Delta$. By [17] the intertwining operator $V_k$ has an homeomorphism
extension to $C^\infty(\R^d)$. The positivity of $V_k$ (see [12]) yields that for every
$x\in \R^d$ there exists a unique probability measures $\mu_x^k$ which is supported by
the convex hull of the orbit of $x$,
$$C(x) := co\{wx, w\in W\}$$
such that
$$V_kf(x) =\int_{\R^d}f(y)d\mu^k_x(y)\quad  \mbox{for every } f\in C^\infty(\R^d).$$
For $x=0$ the measure $\mu_0^k$ is the Dirac measure concentrated at 0 which means that $V_kf(0)=f(0)$.

\par The Dunkl kernel associated with the multiplicity function $k$ is defined on $\R^d\times \R^d$ by
$$E_k(x,y) :=\int_{\R^d} e^{\langle y,\xi \rangle}d\mu_x^k(\xi).$$
It is well known that $E_k$ is positive, symmetric and admits a unique holomorphic extension to $\C^d\times\C^d$ satisfying $E_k(\xi z, w) = E_k(z,\xi w)$ for every $z,w\in \C^d$ and every $\xi\in \C.$ Further, it was proved in  [14] that for each $x\in\R^d$ and $t>0$ there exists a unique compactly
supported probability measure $\sigma_{x,t}^k$ such that
\begin{equation}\label{eq20}
E_k(ix,y) j_{\lambda}(t|y|)=\int_{\R^d}E_k(i\xi,y)d\sigma_{x,t}^k(y)\quad\mbox{ for all }y\in\R^d,
\end{equation}
where $j_\lambda$ is the normalized Bessel function given by
 $$j_\lambda(z)=\Gamma(\lambda+1)\sum_{n=0}^\infty\frac{(-1)^nz^{2n}}{2^{2n}n!\Gamma(n+\lambda+1)}.$$
Moreover,
$${\mbox{ supp}}\sigma_{x,r}^k\subset\bigcup_{w\in W}\overline B(wx,r)\setminus B(0, ||x|-r|).$$

\par The Dunkl translation is defined for every function $f\in C^\infty (\R^d)$ and every $x,y\in \R^d$ by
$$\tau_xf(y):=\int_{\R^d} \int_{\R^d} V_k^{-1}f(\eta+\xi)d\mu^k_x(\xi)d\mu_y^k(\eta),$$
where $V_k^{-1}$ is the inverse of $V_k$ on $C^\infty(\R^d)$. In the
particular case where $f$ is in the Schwartz space $S(\R^d)$ and is radially symmetric (that is there  exists a function $F:\R_+\rightarrow \R$ such that $f = F(|\cdot|)$) then $\tau_xf$ is given by
\begin{equation}\label{eq6}
  \tau_xf(y)=\int_{\R^d}F(\sqrt{|x|^2+|y|^2+2\langle x,\xi \rangle})d\mu_y^k(\xi).
\end{equation}
 Notice that for every $f\in C^\infty(\R^d)$ the map $(x,y)\mapsto \tau_xf(y)$ is symmetric, infinitely differentiable on $\R^d\times\R^d$ and satisfies
 $$\Delta_k\tau_xf=\tau_x\Delta_kf\quad \mbox{ and }\tau_xf(0)=f(0).$$
 Further, if the function $f$ is with compact support then $\tau_xf$ is also with compact support. For arbitrary functions $f,g\in S(\R^d)$, it was proved in \cite{th} that
 \begin{equation}\label{eq10}
 \int_{\R^d}\tau_{-x}f(y)g(y)w_k(y)dy=\int_{\R^d}f(y)\tau_xg(y)w_k(y)dy.
 \end{equation}
  \par Notice that   if the multiplicity function  $k$ vanishes identically then  the Dunkl Laplacian reduces to the classical Laplacian $\Delta$. In this case the measure $\mu_x^k$ is the Dirac measure concentrated at $x$. Then the intertwining operators $V_k$ is the identity operator and so $E_k$  and $\tau_x$ reduces to the classical exponential  function and translation operator respectively.  ThroughoutR this paper we assume that
 $$\lambda:=\frac 12\sum_{\alpha\in R}k(\alpha)+\frac d2-1>0.$$

  According to [14], for every $x\in\R^d$, $r>0$ and $f\in C^\infty(\R^d)$
  $$\frac{1}{d_k}\int_{S^{d-1}} \tau_xf(ry)w_k(y)d\sigma(y)=\int_{\R^d}f(y)d\sigma_{x,r}^k(y),$$
  where $S^{d-1}$ is the unit sphere in $\R^d$, $\sigma$ is the surface area measure on $S^{d-1}$, and $d_k$ is the normalizing constant given by
  $$d_k:=\int_{S^{d-1}}w_k(y)d\sigma(y).$$
  \par In the sequel we write
  $$ M_{x,r}(f):=\int_{\R^d}f(y)d\sigma_{x,r}^k(y),$$
  when the integral makes sense. It was shown in \cite{kods} that for every locally bounded function $g$ and every radial function $f\in S(\R^d)$ with $f= F(|\cdot|)$
  \begin{equation}\label{eq1}
  \int_{\R^d}\tau_{-x}f(y)g(y)w_k(y)dy=d_k\int_0^\infty F(s)s^{2\lambda+1}M_{x,s}(g)ds.
  \end{equation}
 The following result was also proved in \cite{kods}.
 \begin{proposition}\label{prop1}
 Let $V$ be a $W$-invariant open set and let $f$ be a locally bounded function on $V$. Then
  $f\in C^2(V)$ and  $\Delta_kf=0$ on $V$ if and only if $M_{x,t}(f)=f(x)$ for every $x\in V$ and $t>0$ such that $B(x,t)\Subset V$.
 \end{proposition}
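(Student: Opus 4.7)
The plan is to derive a single master identity relating the spherical means $M_{x,s}(f)$ and $M_{x,s}(\Delta_k f)$, and to read off both implications from it. Fix $x\in V$ and $r_0>0$ with $B(x,r_0)\Subset V$, and take a radial test function $\phi=F(|\cdot|)\in C_c^\infty(\R^d)$ with $\mathrm{supp}(F)\subset[0,r_0]$. Three ingredients drive the argument: (i) the Dunkl translate $\tau_{-x}\phi$ is supported in $\bigcup_{w\in W}\overline{B}(wx,r_0)$ (a duality consequence of~(\ref{eq1}) and the support of $\sigma_{x,s}^k$), which lies in $V$ by $W$-invariance, so $\tau_{-x}\phi\in C_c^\infty(V)$; (ii) $\Delta_k\phi$ is again radial, with profile $G(s)=F''(s)+\tfrac{2\lambda+1}{s}F'(s)=s^{-(2\lambda+1)}\bigl(s^{2\lambda+1}F'(s)\bigr)'$; (iii) the commutation $\Delta_k\tau_z g=\tau_z\Delta_k g$ gives $\Delta_{k,y}[\tau_{-x}\phi(y)]=\tau_{-x}(\Delta_k\phi)(y)$.

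Applying the symmetry~(\ref{ti}) with $\varphi=\tau_{-x}\phi$, then~(iii) and the radial integration formula~(\ref{eq1}) on each side, one obtains the master identity
\[
\int_0^\infty F(s)\,s^{2\lambda+1}\,M_{x,s}(\Delta_k f)\,ds \;=\; \int_0^\infty \bigl(s^{2\lambda+1}F'(s)\bigr)'\,M_{x,s}(f)\,ds.
\]
For the direction $\Delta_k f=0 \Rightarrow$ mean value property, the left-hand side vanishes, and letting $F$ range over $C_c^\infty((0,r_0))$ yields the distributional identity $(s^{2\lambda+1}u'(s))'=0$ on $(0,r_0)$ for $u(s):=M_{x,s}(f)$. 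Hypoellipticity on the $W$-invariant set $V$ makes $f$ (and hence $u$) smooth; integrating once gives $s^{2\lambda+1}u'(s)\equiv C$, and smoothness of $u'$ at $0$ combined with $\lambda>0$ forces $C=0$. Together with $u(0^+)=f(x)$ (from $\sigma_{x,0}^k=\delta_x$, obtained by setting $s=0$ in~(\ref{eq20})), this gives $M_{x,s}(f)=f(x)$.

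For the converse, assume the mean value property. First establish smoothness: pick a radial $\phi$ with $\int\phi\,w_k=1$ and very small support, so that~(\ref{eq1}) together with the hypothesis yields the Dunkl-convolution representation $f(x)=\int\tau_{-x}\phi(y)\,f(y)\,w_k(y)\,dy$ in a neighborhood of any given point of $V$; differentiating under the integral, using smoothness of $(x,y)\mapsto\tau_{-x}\phi(y)$ and uniformly compact $y$-support as $x$ varies in a small neighborhood, gives $f\in C^\infty(V)$. Applying the master identity and substituting $M_{x,s}(f)=f(x)$ into the right-hand side reduces it to $f(x)\int_0^\infty(s^{2\lambda+1}F'(s))'\,ds=0$ (the integral vanishes by compact support); hence the left-hand side vanishes for every $F$, forcing $M_{x,s}(\Delta_k f)\equiv 0$ for small $s>0$, and $\Delta_k f(x)=0$ follows by letting $s\to 0^+$. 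The main technical burden lies in ingredient~(i) (support control for Dunkl translates in a $W$-invariant set) and in extracting the pointwise equality $u\equiv f(x)$ from the distributional equation, which requires careful handling of the regularity of $u$ near $s=0$.
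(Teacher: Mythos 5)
The paper offers no proof of Proposition~\ref{prop1}: it is quoted from the companion reference \cite{kods}, so there is no internal argument to compare yours against, and your write-up in fact supplies what the paper outsources. Checked on its own terms it is correct, and it is the argument one would expect that reference to contain. The three ingredients all hold: the support of $\tau_{-x}\phi$ is contained in $\bigcup_{w\in W}\overline B(wx,r_0)$, which lies in $V$ by $W$-invariance (this follows from your duality argument, or more directly from~(\ref{eq6}) and the fact that $\mu_y^k$ is carried by $C(y)$, exactly as in the paper's proof of~(\ref{eq8})); the radial part of $\Delta_k$ is the Bessel operator $F''+\frac{2\lambda+1}{s}F'$ because the difference terms in~(\ref{lapd}) vanish on radial functions and $2\lambda+1=d-1+\sum_{\alpha\in R}k(\alpha)$; and the commutation with $\tau_{-x}$ is stated in Section~2. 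The master identity then follows from~(\ref{ti}) applied to $\varphi=\tau_{-x}\phi\in C_c^\infty(V)$ together with~(\ref{eq1}), and both implications are read off correctly; in particular the mollification identity $f(x)=\int\tau_{-x}\phi(y)f(y)w_k(y)\,dy$ is the right way to secure $f\in C^\infty(V)$ before~(\ref{ti}) may legitimately be invoked in the converse direction. Two details should be written out rather than asserted: (a) in the forward direction, the constant $C$ in $s^{2\lambda+1}\frac{d}{ds}M_{x,s}(f)=C$ is killed by showing that $\frac{d}{ds}M_{x,s}(f)$ stays bounded as $s\to 0$, which you get from $M_{x,s}(f)=\frac{1}{d_k}\int_{S^{d-1}}\tau_x(\chi f)(sy)w_k(y)\,d\sigma(y)$ after inserting a cutoff $\chi$ equal to $1$ on a neighbourhood of $\bigcup_{w\in W}\overline B(wx,r_0)$, since the paper's sphere formula requires a globally smooth function; (b) $f$ must be Borel measurable for $M_{x,t}(f)$ and the pairings against $w_k(y)\,dy$ to make sense (implicit in the statement). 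Neither is a gap, only bookkeeping to record.
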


\section{Green kernel}
For every Borel measurable  function $f$ on $\R^d$ we define
$$P_t^k f(x)=\int_{\R^d}p_t^k(x,y)f(y)w_k(y)dy\quad x\in\R^d,$$
provided the integral makes sense. Here,
$$p_t^k(x,y)=\tau_{-x}q_t(y)\quad\mbox{where}\quad  q_t(y)=\frac{2 \exp\left(-\frac{|y|^2}{4t}\right)}{d_k (4t)^{\lambda+1}\Gamma(\lambda+1)}.$$
Obviously $p_s^k$ is symmetric and positive. Moreover, by \cite{rosler2}, for every $x,y\in\R^d$ and $t,s>0$ we have the following properties
 \begin{enumerate}
 \item $\int_{\R^d}p_t^k(x,\xi)w_k(\xi)d\xi=1$\\
 \item $\int_{\R^d} p_t^k(z,x)p_s^k(z,y)w_k(z)dz=p_{t+s}^k(x,y)$\\
 \item $p_t^k(x,y)\leq \frac{2 \exp\left( -\frac{(|x|-|y|)^2}{4t}\right)}{d_k(4t)^{\lambda+1}\Gamma(\lambda+1)}.$\\
 \end{enumerate}
These yield that the family $(P_t^k)_{t>0}$ forms a semi group (i.e.  $P_t^k\circ P_s^k=P_{t+s}^k$ for every $t,s>0$)  such that for every $t>0$, the kernel  $P_t^k$ is   Markovien (i.e. $P_t^k1=1$) and strong feller (i.e. $P_t^k(\mathcal B_b(\R^d))\subset C_b(\R^d)$).
 Furthermore, in virtue of  \cite{rv},
$$ \lim_{t\rightarrow 0}||P_t^kf-f||_\infty=0 \quad \mbox{for every }f\in C_0(\R^d).$$
 \par The Green operator $G^k$  will play an important role in our approach. It is defined for every Borel bounded or non negative function $f$ by
$$G^kf(x)=\int_0^\infty P_t^kf(x)dt,\quad x\in\R^d.$$

\begin{proposition}\label{prop8}
Let $\varrho$ be a non negative Borel function  on $\R^d$. Then for every $t>0$ and $x\in\R^d$
\begin{equation}\label{lemme1}
P_t^kG^k\varrho(x)= G^kP_t^k\varrho(x)\quad\mbox{ and }\quad G^k\varrho(x)=\int_0^tP_s^k\varrho(x) ds+ P_t^kG^k\varrho(x).
\end{equation}
In particular, $\lim_{t\rightarrow \infty}P_t^kG^k\varrho(x)=0$ provided $G^k\varrho(x)<\infty$.
\end{proposition}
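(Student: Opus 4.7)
The plan is to reduce both identities in \eqref{lemme1} to the semigroup property of $(P_t^k)_{t>0}$ combined with Tonelli's theorem, which is applicable throughout since $\varrho$ and the kernels $p_s^k$ are nonnegative.

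First I would compute $P_t^kG^k\varrho(x)$ by writing it out as an iterated integral and interchanging the order of integration. Since $p_t^k \geq 0$ and $\varrho \geq 0$, Tonelli gives
\[
P_t^k G^k \varrho (x) = P_t^k\!\left(\int_0^\infty P_s^k \varrho \, ds\right)(x) = \int_0^\infty P_t^k P_s^k \varrho(x)\, ds = \int_0^\infty P_{t+s}^k \varrho(x)\, ds,
\]
where the last equality uses the semigroup property $P_t^k\circ P_s^k = P_{t+s}^k$ recalled just before the proposition. The change of variables $u = t+s$ then yields $P_t^kG^k\varrho(x) = \int_t^\infty P_u^k\varrho(x)\, du$. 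The very same computation applied to $G^kP_t^k\varrho(x)$ gives the same right-hand side, which proves the first identity.

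For the second identity I would simply split the defining integral of $G^k\varrho(x)$ at $t$:
\[
G^k\varrho(x) = \int_0^t P_s^k\varrho(x)\, ds + \int_t^\infty P_s^k\varrho(x)\, ds,
\]
and identify the tail integral with $P_t^kG^k\varrho(x)$ by the computation of the previous paragraph.

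Finally, for the limit statement, I would observe that when $G^k\varrho(x) < \infty$ the function $t \mapsto \int_0^t P_s^k \varrho(x)\, ds$ is monotone nondecreasing and converges to $G^k\varrho(x)$ as $t\to\infty$ by monotone convergence. Rearranging the second identity as $P_t^kG^k\varrho(x) = G^k\varrho(x) - \int_0^t P_s^k\varrho(x)\, ds$ then forces $P_t^kG^k\varrho(x)\to 0$. There is no serious obstacle here; the only point that requires care is the justification of the interchange of order, and the nonnegativity hypothesis on $\varrho$ handles it cleanly via Tonelli.
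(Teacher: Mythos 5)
Your proof is correct and follows essentially the same route as the paper: interchange the integrals by Tonelli, apply the semigroup property to identify $P_t^kG^k\varrho(x)$ and $G^kP_t^k\varrho(x)$ with $\int_t^\infty P_s^k\varrho(x)\,ds$, and split the defining integral of $G^k\varrho(x)$ at $t$. Your explicit justification of the final limit via monotone convergence is a small addition the paper leaves implicit, but it is the intended argument.
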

\begin{proof} Let $t>0$ and $x\in\R^d$. Then
 \begin{eqnarray*}
 P_t^k G^k\varrho(x)&=&\int_{\R^d} G^k\varrho(y)p_t^k(x,y)w_k(y)dy\\
 &=&\int_0^\infty\int_{\R^d}P_s^k\varrho(y)p_t^k(x,y)w_k(y)dyds\\
 &=&\int_0^\infty P_t^kP_s^k\varrho(x)ds= \int_0^\infty P_s^kP_t^k\varrho(x)ds=G^kP_t^k\varrho(x)\\
&=&\int_0^\infty P_{t+s}^k\varrho(x)ds\\
&=&\int_t^\infty P_s^k\varrho(x)ds.
\end{eqnarray*}
 Whence
 \begin{eqnarray*}
 G^k\varrho(x)&=&\int_0^\infty P_s^k\varrho(x)ds\\
 &=&\int_0^t P_s^k\varrho(x)ds+\int_t^\infty P_s^k\varrho(x)ds\\
 &=&\int_0^tP_s^k\varrho(x) ds+ P_t^kG^k\varrho(x).
 \end{eqnarray*}

 \end{proof}

For every $x,y\in \R^d$ we define the Green function by
 $$G^k(x,y)=\int_0^\infty p_t^k(x,y)dt.$$
Obviously, $G^k$ is symmetric and positive on $\R^d\times\R^d$.

\begin{lemma}
 For every $x,y\in\R^d$
 \begin{equation}\label{eq8}
 G^k(x,y)\leq \frac{1}{2d_k\lambda}\left(\min_{w\in W}|wy-x|\right)^{-2\lambda}.
 \end{equation}
 \end{lemma}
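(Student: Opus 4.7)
The plan is to exploit the representation of $p_t^k(x,y)$ as a Dunkl translation of the radial heat kernel $q_t$ and then integrate in $t$ via a standard Gamma substitution. First I would apply formula~(\ref{eq6}) to the radial function $q_t = Q_t(|\cdot|)$ with $Q_t(r) = \frac{2\exp(-r^2/(4t))}{d_k(4t)^{\lambda+1}\Gamma(\lambda+1)}$, which gives
$$
p_t^k(x,y) = \tau_{-x}q_t(y) = \int_{\R^d} Q_t\bigl(\sqrt{|x|^2+|y|^2-2\langle x,\xi\rangle}\bigr)\, d\mu_y^k(\xi).
$$

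Next I would use the support property of the intertwining measure. Since $\mu_y^k$ is a probability measure supported on $C(y) = \operatorname{co}\{wy : w \in W\}$, any $\xi$ in the support is an affine combination $\xi = \sum_{w\in W}\lambda_w(wy)$ with $\lambda_w\geq 0$ and $\sum_w \lambda_w = 1$. The map $\xi \mapsto |x|^2+|y|^2-2\langle x,\xi\rangle$ is affine and coincides with $|x-wy|^2$ at $\xi = wy$ (using $|wy|=|y|$), so
$$
|x|^2+|y|^2-2\langle x,\xi\rangle = \sum_{w\in W}\lambda_w\, |x-wy|^2 \geq \Bigl(\min_{w\in W}|wy-x|\Bigr)^{2}.
$$
The monotonicity of $r\mapsto Q_t(r)$ on $\R_+$ then yields the pointwise estimate
$$
p_t^k(x,y) \leq \frac{2}{d_k(4t)^{\lambda+1}\Gamma(\lambda+1)}\exp\!\left(-\frac{(\min_{w\in W}|wy-x|)^2}{4t}\right).
$$

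Finally, setting $r := \min_{w\in W}|wy-x|$ and performing the change of variables $u = r^{2}/(4t)$ in $G^k(x,y) = \int_0^\infty p_t^k(x,y)\,dt$, the time integral collapses to $\int_0^\infty u^{\lambda-1}e^{-u}\,du = \Gamma(\lambda)$, which converges because $\lambda > 0$ by our standing assumption. Collecting the constants and using $\Gamma(\lambda+1)=\lambda\Gamma(\lambda)$ produces exactly $G^k(x,y) \leq \frac{1}{2d_k\lambda}\,r^{-2\lambda}$, which is~(\ref{eq8}).

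The main obstacle is the second step. The bound listed as property~(3) of the semigroup, which only involves $(|x|-|y|)^2$, is too weak for our purpose: it would give a right-hand side that blows up on the entire sphere $\{|y|=|x|\}$ rather than just on the $W$-orbit of $x$. One must therefore go through the explicit integral representation of $\tau_{-x}q_t(y)$ against $\mu_y^k$ and exploit the convex-hull structure of the support to upgrade $||x|-|y||$ to the sharper quantity $\min_{w\in W}|wy-x|$. Once this pointwise estimate is established, the remaining $t$-integration is a routine Gamma-function calculation.
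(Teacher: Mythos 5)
Your proof is correct and follows essentially the same route as the paper: both arguments combine the representation~(\ref{eq6}) of $\tau_{-x}q_t$, the fact that $\mathrm{supp}\,\mu_y^k\subset C(y)$ gives $|x|^2+|y|^2-2\langle x,\xi\rangle\geq\min_{w\in W}|wy-x|^2$, and a Gamma-function substitution in $t$. The only (immaterial) difference is the order of operations --- the paper first integrates in $t$ to obtain the exact identity~(\ref{eq7}) and then estimates the quadratic form, while you estimate first and integrate second; your explicit convex-combination justification of the key inequality is a nice touch the paper leaves implicit.
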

 \begin{proof}
  Using~(\ref{eq6}), we see that
  $$G^k(x,y)=\frac{2}{d_k\Gamma(\lambda+1)}\int_{\R^d}\int_0^\infty\frac{1}{(4t)^{\lambda+1}}\exp\left(-\frac{|x|^2+|y|^2-2\langle x,\xi\rangle}{4t}\right)dt d\mu_y^k(\xi).$$
  we make the substitution $t\mapsto \frac{|x|^2+|y|^2-2\langle x,\xi\rangle}{4t}$ to obtain,
  \begin{equation}\label{eq7}
  G^k(x,y)=\frac{1}{2d_k\lambda}\int_{\R^d}(|x|^2+|y|^2-2\langle x,\xi\rangle)^{-\lambda}d\mu^k_y(\xi).
  \end{equation}
  Finally, recall that the support of $\mu_y^k$ is contained in $C(y)$ and observe that for every $\xi\in C(y)$,
  $$|x|^2+|y|^2-2\langle x,\xi\rangle\geq \min_{w\in W}|wy-x|^2$$
 to conclude.
 \end{proof}

 \begin{proposition}\label{prop7}
 Let $f$ be a bounded Borel measurable function on $\R^d$ with compact support. Then $G^kf\in C_0(\R^d)$ and for every $x\in\R^d$
 \begin{equation}\label{eq3}
 G^kf(x)=\int_{\R^d} G^k(x,y)f(y)w_k(y)dy.
 \end{equation}
 \end{proposition}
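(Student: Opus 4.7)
The plan is to split the argument into three pieces: first verify that $G^kf(x)$ is finite for every $x$ with a uniform dominating bound in $t$, then apply Fubini to obtain the integral representation~(\ref{eq3}), and finally use that representation together with the strong Feller property to conclude continuity and decay at infinity.

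The key preliminary estimate is on $P_t^kf(x)$ for $t$ large. Since $f$ has compact support, say in $B(0,R)$, using property (3) of $p_t^k$ I would write
\begin{equation*}
|P_t^kf(x)|\leq \|f\|_\infty\int_{B(0,R)}p_t^k(x,y)w_k(y)\,dy\leq \frac{2\|f\|_\infty}{d_k(4t)^{\lambda+1}\Gamma(\lambda+1)}\int_{B(0,R)}w_k(y)\,dy,
\end{equation*}
where I simply dropped the exponential factor. Combined with the trivial bound $|P_t^kf(x)|\leq \|f\|_\infty$ (recall $P_t^k$ is Markovian), this yields a majorant of the form $\varphi(t):=\min(\|f\|_\infty,C\,t^{-(\lambda+1)})$ which is integrable on $(0,\infty)$ precisely because $\lambda>0$. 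In particular $G^kf(x)<\infty$ and the bound is independent of $x$. The same estimate applied to $|f|$ (or equivalently the positivity of $p_t^k$) makes the double integral $\int_0^\infty\int_{\R^d}p_t^k(x,y)|f(y)|w_k(y)\,dy\,dt$ finite, so Fubini's theorem delivers formula~(\ref{eq3}).

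For vanishing at infinity I would plug the integral representation into the Green function bound~(\ref{eq8}). If $|x|>2R$, then for every $y\in\operatorname{supp}f$ and every $w\in W$ one has $|wy-x|\geq |x|-|y|\geq |x|-R$, so
\begin{equation*}
G^k(x,y)\leq \frac{1}{2d_k\lambda}(|x|-R)^{-2\lambda},
\end{equation*}
and therefore $|G^kf(x)|\leq \frac{\|f\|_\infty}{2d_k\lambda}(|x|-R)^{-2\lambda}\int_{B(0,R)}w_k(y)\,dy$, which tends to $0$ as $|x|\to\infty$ since $\lambda>0$. Continuity follows by dominated convergence in $t$: for each $t>0$ the function $P_t^kf$ is continuous (strong Feller property), and the pointwise convergence along any sequence $x_n\to x$ is dominated by $\varphi(t)$.

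The main obstacle is really just the mild one of checking that the dominating estimate does not depend on $x$, which is why I prefer to keep the exponential factor in the bound rather than trying to make it decay in $|x|$. Everything else is routine. I do not foresee any difficulty with Fubini, because absolute convergence is already built into the uniform bound $\varphi(t)$.
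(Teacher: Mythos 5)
Your proof is correct. The overall architecture is the same as the paper's (Fubini--Tonelli for the integral representation, dominated convergence in $t$ for continuity, and the Green function bound~(\ref{eq8}) for the decay at infinity; your treatment of these three steps matches the paper's almost verbatim), but your key domination estimate is obtained by a genuinely different and more elementary route. The paper bounds $\int_{\R^d}\tau_{-x}q_t(y)|f(y)|w_k(y)\,dy$ via the radial identity~(\ref{eq1}) and the support property of $\sigma_{x,s}^k$, yielding the majorant $h(t)=d_k c\int_0^{r+|x|}q_t(s)s^{2\lambda+1}\,ds$, which depends on $|x|$ and forces the continuity argument to be localized to balls $B(0,R)$. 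You instead drop the exponential in the Gaussian upper bound (property (3) of $p_t^k$) and combine it with Markovianity to get $P_t^k|f|(x)\le\min\bigl(\|f\|_\infty,\,Ct^{-(\lambda+1)}\bigr)$, integrable since $\lambda>0$ and \emph{uniform in $x$ over all of $\R^d$}, so no localization is needed; the price is that your bound is cruder (it does not record the quantitative dependence $\frac{c(r+|x|)^2}{4\lambda}$ that the paper extracts), but for the purposes of this proposition nothing is lost. One small point worth making explicit in a final write-up: in the decay estimate you use $|wy-x|\ge|x|-|y|$, which relies on $|wy|=|y|$, i.e.\ on the elements of $W$ being orthogonal; this is true since $W$ is generated by reflections, but it deserves a word.
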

 \begin{proof} Let $x\in\R^d$. Then
 $$G^kf(x)=\int_0^\infty\int_{\R^d}\tau_{-x}q_t(y)f(y)w_k(y)dy dt.$$
In order to prove (\ref{eq3}), we only have to make sure that we may interchange the order of integration.
   It follows from (\ref{eq1}) that for every  $t>0$,
 $$\int_{\R^d}\tau_{-x}q_t(y)|f(y)|w_k(y)dy=d_k\int_0^{\infty}q_t(s)s^{2\lambda+1}M_{x,s}(|f|)ds$$
 where $q_t(s)=\frac{2 \exp\left(-\frac{s^2}{4t}\right)}{d_k (4t)^{\lambda+1}\Gamma(\lambda+1)}.$
 Let $r>0$ such that $\mbox{supp } f\subset B(0,r)$ and let $c:=\sup_{y\in\R^d}|f(y)|$. Since $$\mbox {supp }\sigma_{x,s}^k\subset \R^d\setminus B(0,|s-|x||),$$ then $M_{x,s}(|f|)=0$ whenever $s>|x|+r$. Hence,
 \begin{eqnarray}
 \int_{\R^d}\tau_{-x}q_t(y)|f(y)|w_k(y)dy&=&d_k\int_0^{r+|x|}q_t(s)s^{2\lambda+1}M_{x,s}(|f|)ds\nonumber\\
 &\leq& d_k c\int_0^{r+|x|}q_t(s)s^{2\lambda+1}ds.\label{eq4}
 \end{eqnarray}
 Thus,
 \begin{eqnarray*}
 \int_0^\infty\int_{\R^d} \tau_{-x}q_t(y)|f(y)|w_k(y)dydt&\leq& d_kc\int_0^{r+|x|}\left(\int_0^\infty q_t(s)dt\right)s^{2\lambda+1}ds\\
 &\leq &\frac{c(r+|x|)^2}{4\lambda}<\infty.
 \end{eqnarray*}
 Hence, we apply Fubini-Tonelli theorem to get~(\ref{eq3}). Now we turn to prove the continuity of $G^kf$. The function $P_t^kf$ is continuous on $\R^d$. Further, by~(\ref{eq4}), for every $R>0$ and $x\in B(0,R)$
 $$|P_t^kf(x)|\leq d_k c\int_0^{r+R}q_t(s)s^{2\lambda+1}ds=:h(t).$$
 By direct computation we see that
 $$\int_0^\infty h(t) dt=\frac{c(r+R)^2}{4\lambda}<\infty.$$
 Thus, by Lebesgue theorem, $G^kf$ is continuous on $B(0,R)$ and then on $\R^d$, since $R$ is arbitrary.
Finally, by (\ref{eq8}), for every $x\in\R^d$ such that $|x|\geq 2r$,
 $$|G^k(x,y)|\leq \frac{1}{2d_k\lambda(|x|-|y|)^{2\lambda}}\leq \frac{1}{2d_k\lambda}(2r-|y|)^{2\lambda}.$$
 Thus $\lim_{|x|\rightarrow \infty}G^k(x,y)=0$ which leads by (\ref{eq3}) to $$\lim_{|x|\rightarrow \infty }G^kf(x)=0.$$
 \end{proof}
 \begin{theorem}\label{distr}
 Let $f\in\mathcal B_b(\R^d)$ with compact support. For every~$\varphi\in C_c^\infty(\R^d)$
 $$\int_{\R^d}G^kf(x)\Delta_k\varphi(x)w_k(x)dx=-\int_{\R^d}f(x)\varphi(x)w_k(x)dx.$$
 \end{theorem}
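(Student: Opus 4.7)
The plan is to represent $G^k$ as $\int_0^\infty P_t^k\,dt$, use the $w_k$-symmetry of $p_t^k$ to move $\Delta_k$ onto $P_t^k\varphi$, and then invoke the Dunkl heat equation $\partial_tP_t^k\varphi=\Delta_kP_t^k\varphi$ to convert the spatial derivative into a time derivative that can be integrated out via the fundamental theorem of calculus.

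First I would write, by Fubini--Tonelli,
$$\int_{\R^d}G^kf(x)\Delta_k\varphi(x)w_k(x)\,dx=\int_0^\infty\!\!\int_{\R^d}P_t^kf(x)\Delta_k\varphi(x)w_k(x)\,dx\,dt.$$
Absolute integrability is secured by applying the argument of Proposition~\ref{prop7} to $|f|$: since $\Delta_k\varphi\in\mathcal B_b(\R^d)$ is supported in some ball $B(0,R)$ and $G^k|f|$ is continuous and bounded there, the iterated integral of the modulus is finite, allowing the interchange. Next, the symmetry of $p_t^k$ with respect to $w_k(x)w_k(y)\,dx\,dy$ yields, for each $t>0$,
$$\int_{\R^d}P_t^kf(x)\Delta_k\varphi(x)w_k(x)\,dx=\int_{\R^d}f(x)P_t^k(\Delta_k\varphi)(x)w_k(x)\,dx.$$

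Now I would invoke the Dunkl heat equation: for $\varphi\in C_c^\infty(\R^d)$ one has $P_t^k(\Delta_k\varphi)=\Delta_k P_t^k\varphi=\frac{\partial}{\partial t}P_t^k\varphi$, which is the standard semigroup/infinitesimal-generator relation for $(P_t^k)_t$ established in~\cite{rv}. Hence, by the fundamental theorem of calculus, for every $T>0$
$$\int_0^T P_t^k(\Delta_k\varphi)(x)\,dt=P_T^k\varphi(x)-\varphi(x).$$
Integrating against $f(x)w_k(x)dx$ and letting $T\to\infty$, the dominated convergence theorem applies since $|P_T^k\varphi|\le\|\varphi\|_\infty$ (the semigroup is Markovian) and $f$ has compact support; moreover the bound (3) on $p_t^k$ gives
$$|P_T^k\varphi(x)|\le\|\varphi\|_\infty\int_{\operatorname{supp}\varphi}\frac{2\,w_k(y)}{d_k(4T)^{\lambda+1}\Gamma(\lambda+1)}\,dy\xrightarrow[T\to\infty]{}0,$$
so $\int f(x)P_T^k\varphi(x)w_k(x)dx\to 0$, and the $-\varphi(x)$ term supplies the desired right-hand side.

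The main obstacle is purely one of careful bookkeeping with the interchanges of integrals and limits. The algebraic identity is immediate once $\Delta_k$ becomes a $t$-derivative, but one must (i) check absolute integrability of $(t,x)\mapsto P_t^kf(x)\Delta_k\varphi(x)w_k(x)$ before the initial Fubini step, (ii) justify that $\varphi\in C_c^\infty$ belongs to the strong domain of $\Delta_k$ as the generator of $(P_t^k)_t$ so that $\partial_tP_t^k\varphi=P_t^k\Delta_k\varphi$ holds pointwise, and (iii) pass to $T=\infty$ using only the Gaussian-type upper bound on $p_t^k$ rather than any more refined decay. None of these is serious given the material already collected in Section 3, but they are what turns the formal calculation into an actual proof.
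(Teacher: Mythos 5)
Your argument is correct, but it reaches the key identity $G^k(\Delta_k\varphi)=-\varphi$ by a genuinely different route than the paper. The paper starts from the fundamental-solution identity $\int_{\R^d}g^k(y)\Delta_k\varphi(y)w_k(y)\,dy=-\varphi(0)$ for the Riesz-type kernel $g^k(y)=\frac{1}{2d_k\lambda|y|^{2\lambda}}$ (quoted from Mejjaoli--Trim\`eche), and then propagates it to all $y$ using the Dunkl translation: the adjointness relation~(\ref{eq10}) together with $\Delta_k\tau_y=\tau_y\Delta_k$ gives $G^k(\Delta_k\varphi)(y)=-\tau_y\varphi(0)=-\varphi(y)$, after which Fubini finishes exactly as in your last step. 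You instead obtain the same identity dynamically, via $\int_0^TP_t^k(\Delta_k\varphi)\,dt=P_T^k\varphi-\varphi$ and the decay $P_T^k\varphi\to0$ from bound (3); your Fubini and dominated-convergence justifications are sound, since $\Delta_k\varphi$ is bounded with compact support and $G^k|f|$, $G^k|\Delta_k\varphi|$ are finite and locally bounded by Proposition~\ref{prop7}. What your route buys is independence from the Dunkl translation machinery and from the external fundamental-solution result; what it costs is that the whole weight rests on the semigroup identity $\partial_tP_t^k\varphi=P_t^k\Delta_k\varphi=\Delta_kP_t^k\varphi$ for $\varphi\in C_c^\infty(\R^d)$, i.e.\ on $C_c^\infty$ lying in the strong domain of the generator. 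The paper's Section~3 only records strong continuity of $(P_t^k)_t$ on $C_0(\R^d)$, so you must genuinely import this from \cite{rv} (or \cite{rosler2}); it is a true and citable fact, but you should state the precise reference, as it is the one load-bearing step your proof does not verify from the material at hand.
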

 \begin{proof}
 Let $$g^k(y):=\int_0^\infty q_t(y)dt=\frac{1}{2d_k\lambda|y|^{2\lambda}}.$$
 It was shown in \cite[Theorem 1.4.4]{mt1} that for every $\varphi\in C_c^\infty(\R^d)$
 $$\int_{\R^d}g^k(y)\Delta_k\varphi(y)w_k(y)dy=-\varphi(0).$$
 This leads to
 \begin{equation}\label{eq9}
G^k(\Delta_k\varphi)(y)=-\varphi(y)
 \end{equation}
 Indeed, using (\ref{eq10}) and the fact that $\Delta_k\tau_z=\tau_z\Delta_k$ we get
 \begin{eqnarray*}
 G^k(\Delta_k\varphi)(y)&=&\int_{\R^d} G^k(x,y)\Delta_k\varphi(x) w_k(x)dx\\
 &=&\int_{\R^d}\int_0^\infty \tau_{-y}q_t(x)\Delta_k\varphi(x)w_k(x)dtdx\\
 &=&\int_0^\infty\int_{\R^d}q_t(x)\Delta_k\tau_y\varphi(x)w_k(x)dxdt\\
 &=&\int_{\R^d} g^k(x)\Delta_k\tau_y\varphi(x)w_k(x)dx\\
 &=&-\tau_{y}\varphi(0)=-\varphi(y).
 \end{eqnarray*}
Whence
 \begin{eqnarray*}
 \int_{\R^d}G^kf(x)\Delta_k\varphi(x)w_k(x)dx&=&\int_{\R^d}\left(\int_{\R^d}G^k(x,y)\Delta_k\varphi(x)w_k(x)dx\right)f(y)w_k(y)dy\\
 &=&-\int_{\R^d}f(y)\varphi(y)w_k(y)dy.
 \end{eqnarray*}
 \end{proof}

 \begin{corollary}
 Let $V$ be a $W$-invariant open set and $f$ be a Borel bounded function on $\R^d$   with compact support such that $f=0$ on $V$. Then
 \begin{equation}\label{cor2}
M_{x,t}(G^kf)=G^kf(x),\quad \mbox{ for every }B(x,t)\Subset V.
\end{equation}
 \end{corollary}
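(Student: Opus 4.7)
The plan is to reduce the desired mean-value identity to the $\Delta_k$-harmonicity of $G^kf$ on $V$, and then invoke Proposition~\ref{prop1}. Since $V$ is $W$-invariant, Proposition~\ref{prop1} tells us that for a locally bounded function $u$ on $V$, the identity $M_{x,t}(u)=u(x)$ for every $B(x,t)\Subset V$ is equivalent to $u\in C^2(V)$ with $\Delta_k u=0$. By Proposition~\ref{prop7}, $G^kf$ belongs to $C_0(\R^d)$, in particular it is continuous (hence locally bounded) on $V$. So everything boils down to proving $\Delta_k G^kf=0$ on $V$.

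First, I would check the vanishing of $\Delta_k G^kf$ in the distributional sense on $V$. Let $\varphi\in C_c^\infty(V)$; then $\varphi$ extends by zero to an element of $C_c^\infty(\R^d)$, and Theorem~\ref{distr} gives
\begin{equation*}
\int_{\R^d}G^kf(x)\Delta_k\varphi(x)w_k(x)\,dx=-\int_{\R^d}f(x)\varphi(x)w_k(x)\,dx.
\end{equation*}
Since $\operatorname{supp}\varphi\subset V$ and $f\equiv 0$ on $V$, the right-hand side equals $0$. Thus
\begin{equation*}
\int_V G^kf(x)\Delta_k\varphi(x)w_k(x)\,dx=0\quad\text{for every }\varphi\in C_c^\infty(V),
\end{equation*}
which is precisely the statement that $G^kf$ is a distributional solution of $\Delta_k u=0$ on $V$.

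Next I would invoke the hypoellipticity of $\Delta_k$ on $W$-invariant open sets, as recalled in Section~2 (from \cite{mt2}, \cite{kods}). Since $G^kf$ is continuous on $V$ and $V$ is $W$-invariant, hypoellipticity upgrades the distributional identity above to the classical statement $G^kf\in C^\infty(V)$ with $\Delta_k(G^kf)=0$ on $V$. Applying Proposition~\ref{prop1} to $u=G^kf$ then yields $M_{x,t}(G^kf)=G^kf(x)$ for every ball with $B(x,t)\Subset V$, which is~(\ref{cor2}).

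There is no real obstacle here: the argument is an assembly of three previous ingredients (Theorem~\ref{distr}, hypoellipticity of $\Delta_k$ in the $W$-invariant setting, and Proposition~\ref{prop1}), and the only point to be attentive to is that the $W$-invariance of $V$ is genuinely used — both to apply hypoellipticity and to be in the hypotheses of Proposition~\ref{prop1}; the hypothesis $f\equiv 0$ on $V$ is exactly what kills the source term produced by Theorem~\ref{distr}.
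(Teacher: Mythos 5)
Your argument is correct and follows exactly the paper's own route: apply Theorem~\ref{distr} with $\varphi\in C_c^\infty(V)$ and use $f=0$ on $V$ to get the distributional identity, upgrade it via hypoellipticity of $\Delta_k$ on the $W$-invariant set $V$ to classical $\Delta_k$-harmonicity of $G^kf$, and conclude with Proposition~\ref{prop1}. Nothing to add.
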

\begin{proof}
In virtue of the above theorem
$$\int_{\R^d}G^kf(x)\Delta_k\varphi(x)w_k(x)dx=0,\quad \mbox{for every }\varphi\in C^\infty_c(V).$$
Hence, by the hypoellipticity of $\Delta_k$ it follows that $G^kf$ is infinitely differentiable on $V$ which yields by~(\ref{ti}) that
$$\int_{\R^d}\Delta_kG^kf(x)\varphi(x)w_k(x)dx=0,\quad \mbox{for every }\varphi\in C^\infty_c(V).$$
This means that $G^kf$ is $\Delta_k$-harmonic on $V$. Finally use Proposition~\ref{prop1} to conclude.
\end{proof}
 \begin{proposition}\label{prop112} Let $y,x\in \R^d$ and $t>0$ and denote $G^k_y:=G^k(\cdot,y)$. Then
 $$M_{x,t}(G^k_y)\leq G^k(x,y).$$
 Moreover, if $B(x,t)\Subset \R^d\setminus O(y)$, where
 $$O(y):=\{wy :w\in W\},$$ then
 $M_{x,t}(G^k_y)=G^k(x,y).$
 \end{proposition}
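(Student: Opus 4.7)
Plan. The proposition contains two claims: the equality $M_{x,t}(G^k_y)=G^k(x,y)$ when $B(x,t)\Subset V_y:=\R^d\setminus O(y)$, and a pointwise supermean inequality for arbitrary $x,t$. My plan is to obtain the equality from $\Delta_k$-harmonicity of $G^k_y$ on $V_y$ combined with Proposition~\ref{prop1}, and then derive the general inequality by approximating $G^k_y$ from below by smooth $\Delta_k$-superharmonic functions whose spherical means can be controlled directly.

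To establish harmonicity of $G^k_y$ on $V_y$, I would start from identity (\ref{eq9}), which by symmetry of $G^k$ and Fubini reads
\[\int_{\R^d} G^k(x,y)\,\Delta_k\varphi(x)\,w_k(x)\,dx=-\varphi(y)\qquad\text{for every }\varphi\in C_c^\infty(\R^d).\]
If $\varphi\in C_c^\infty(V_y)$ then $\varphi(y)=0$, so the left-hand side vanishes. Continuity of $G^k_y$ on $V_y$ comes from (\ref{eq7}) via dominated convergence: for $x_0\in V_y$ the integrand $(|x|^2+|y|^2-2\langle x,\xi\rangle)^{-\lambda}$ stays uniformly bounded for $x$ near $x_0$ and $\xi\in C(y)$, because $|x_0|^2+|y|^2-2\langle x_0,\xi\rangle\ge\min_{w\in W}|x_0-wy|^2>0$. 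Since $V_y$ is $W$-invariant (as $O(y)$ is), hypoellipticity of $\Delta_k$ yields $G^k_y\in C^\infty(V_y)$, and a further use of (\ref{ti}) upgrades the distributional vanishing to $\Delta_k G^k_y\equiv 0$ on $V_y$. Proposition~\ref{prop1} then delivers $M_{x,t}(G^k_y)=G^k(x,y)$ whenever $B(x,t)\Subset V_y$.

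For the general inequality, I would set $u_\epsilon:=P_\epsilon^k G^k_y$. The semigroup identity gives $u_\epsilon(z)=\int_\epsilon^\infty p_s^k(z,y)\,ds$, so $u_\epsilon\in C^\infty(\R^d)\cap L^\infty(\R^d)$ with $\Delta_k u_\epsilon=-p_\epsilon^k(\cdot,y)\le 0$, and $u_\epsilon\uparrow G^k_y$ as $\epsilon\downarrow 0$ by monotone convergence. Applying the Dunkl analogue of Darboux's radial equation for spherical means (from~\cite{kods}) to $h(r):=M_{x,r}(u_\epsilon)$ gives
\[\bigl(r^{2\lambda+1}h'(r)\bigr)'=r^{2\lambda+1}\,M_{x,r}(\Delta_k u_\epsilon)\le 0,\]
and together with $h'(0)=0$ this forces $h$ to be non-increasing; hence $M_{x,t}(u_\epsilon)\le u_\epsilon(x)\le G^k(x,y)$. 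Passing $\epsilon\downarrow 0$ by monotone convergence inside the positive measure $\sigma_{x,t}^k$ yields the desired bound $M_{x,t}(G^k_y)\le G^k(x,y)$.

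The main obstacle is the appeal to the Dunkl Darboux-type radial equation in the second step, which is an ingredient external to the present paper and must be imported from~\cite{kods}. The first step is comparatively clean once one recognizes that (\ref{eq9}) amounts to the distributional identity $\Delta_k G^k_y=-\delta_y/w_k(y)$, so that harmonicity of $G^k_y$ on $V_y$ follows immediately from the fact that $y\notin V_y$, without re-running the heat-semigroup calculation used to prove (\ref{eq9}).
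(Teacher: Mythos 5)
Your argument is correct in substance but follows a genuinely different route from the paper. The paper proves both claims at once by an explicit computation: it takes R\"osler's Bessel-integral formula for $M_{x,t}(p_s^k(\cdot,y))$ (equation~(\ref{eq12}), from \cite{rosler2}), integrates over $s$, and evaluates the resulting integral of a product of two Bessel functions via formula 11.4.33 of \cite{abramowitz}, arriving at the closed form
$$M_{x,t}(G^k_y)=\frac{1}{2\lambda d_k}\int_{\R^d}\bigl(\max(t,v_{x,y}(\xi))\bigr)^{-2\lambda}\,d\mu_y^k(\xi),$$
from which the inequality is immediate (compare with~(\ref{eq7})) and the equality case follows because $v_{x,y}(\xi)>t$ on $C(y)$ when $B(x,t)\Subset\R^d\setminus O(y)$. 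You instead split the two claims. Your treatment of the equality case (distributional identity~(\ref{eq9}) plus symmetry, hypoellipticity on the $W$-invariant set $\R^d\setminus O(y)$, then Proposition~\ref{prop1}) is sound and in fact mirrors exactly the argument the paper uses in the corollary following Theorem~\ref{distr} for $G^kf$ with $f$ vanishing on $V$; this is arguably cleaner than the Bessel computation for that half of the statement. Your proof of the inequality, however, imports two facts not established in the paper: that $p_s^k(\cdot,y)$ solves the Dunkl heat equation in the spatial variable (needed for $\Delta_k u_\epsilon=-p_\epsilon^k(\cdot,y)$, and the differentiation under the integral sign there is asserted rather than justified --- it requires decay estimates on $\Delta_k p_s^k(\cdot,y)$ as $s\to\infty$, locally uniformly in $x$), and the Darboux-type radial relation $\bigl(r^{2\lambda+1}h'(r)\bigr)'=r^{2\lambda+1}M_{x,r}(\Delta_k u_\epsilon)$ from \cite{kods}. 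Granting these, the monotonicity argument and the monotone-convergence passage $\epsilon\downarrow 0$ are correct. The trade-off is clear: the paper's computation is less conceptual but self-contained modulo a tabulated integral, and it yields strictly more information (an exact formula for $M_{x,t}(G^k_y)$ for all $x,t$), whereas your route is more structural but leans on external regularity and mean-value machinery that would need to be quoted precisely.
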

\begin{proof}
 To abbreviate the notation we denote
 $$v_{x,y}(\xi):=\sqrt{|x|^2+|y|^2-2\langle x,\xi\rangle}.$$
 Using  (\ref{eq7}) we see that
 $$G^k(x,y)= \frac{1}{2\lambda d_k}\int_{\R^d}v_{x,y}(\xi)^{-2\lambda}d\mu_y^k(\xi).$$
 On the other hand, it was shown (see Proof of Theorem 3.1 in \cite{rosler2}) that for every $s>0$,
 \begin{equation}\label{eq12}
 M_{x,t}(p_s^k(\cdot,y))=c_\lambda\int_{\R^d}\int_0^\infty j_{\lambda}(rv_{x,y}(\xi))j_\lambda(rt)e^{-sr^2}r^{2\lambda+1}dr d\mu_y^k(\xi),
 \end{equation}
 where $c_\lambda=\frac{1}{d_k4^\lambda(\Gamma(\lambda+1))^2}$. We integrate (\ref{eq12}) over $\{0<s<\infty\}$ to obtain,
 $$M_{x,t}(G^k_y) =c_\lambda\int_{\R^d}\int_0^\infty j_\lambda(rv_{x,y}(\xi))j_\lambda(rt)r^{2\lambda-1}dr d\mu_y^k(\xi).$$
 Using formula 11.4.33 in \cite{abramowitz} we obtain
 \begin{eqnarray*}
 M_{x,t}(G^k_y)&=&\frac{1}{2\lambda d_k}\int_{\R^d}(\max(t,v_{x,y}(\xi))^{-2\lambda}d\mu_y^k(\xi).
 \end{eqnarray*}
 Hence
 $$M_{x,t}(G^k_y) \leq\frac{1}{2\lambda d_k}\int_{\R^d}v_{x,y}(\xi)^{-2\lambda}d\mu_y^k(\xi)=G^k(x,y).$$
 Moreover, it is easy to see that if $B(x,t)\Subset \R^d\setminus O(y)$ then  $|x-wy|>t$ for every $w\in W$  and so
$$v_{x,y}(\xi)>t \mbox{ for every  }\xi\in C(y).$$
Consequently, for every $B(x,t)\Subset \R^d\setminus O(y)$
 \begin{eqnarray*}
 M_{x,t}(G^k_y)&=&\frac{1}{2\lambda d_k}\int_{\R^d}(v_{x,y}(\xi))^{-2\lambda}d\mu_y^k(\xi)=G^k(x,y).
 \end{eqnarray*}
 \end{proof}

 \section{Minimum principle}
 \begin{lemma}\label{lemme3}
 The set $\{G^k\varphi, \quad \varphi\in \mathcal B^+(\R^d)\}$ is linearly separating. That is, for all $\lambda>0$ and all $x,y\in\R^d$ such that $x \neq y$, there exists $\varphi\in\mathcal B^+(\R^d)$ such that $G^k\varphi(x)\neq \lambda G^k\varphi(y).$
 \end{lemma}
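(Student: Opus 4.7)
The plan is to argue by contradiction, reducing the separation property to the pointwise identity $G^k(\Delta_k\varphi)=-\varphi$ that has already been obtained as equation~\eqref{eq9} inside the proof of Theorem~\ref{distr}. Suppose that there exist $\lambda>0$ and $x\ne y$ in $\R^d$ such that $G^k\varphi(x)=\lambda G^k\varphi(y)$ for every $\varphi\in\mathcal{B}^+(\R^d)$; my goal is to derive a contradiction.

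First I would upgrade this equality from $\mathcal{B}^+(\R^d)$ to the class of bounded Borel functions with compact support. If $\psi$ is such a function, then $\psi=\psi^+-\psi^-$ with $\psi^\pm\in\mathcal{B}^+(\R^d)$ compactly supported, so by Proposition~\ref{prop7} the potentials $G^k\psi^\pm$ are finite (indeed continuous and vanishing at infinity). Subtracting the two equalities $G^k\psi^\pm(x)=\lambda G^k\psi^\pm(y)$ yields $G^k\psi(x)=\lambda G^k\psi(y)$ for every such $\psi$.

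Next I would test this extended equality against $\psi=\Delta_k\varphi$ for an arbitrary $\varphi\in C_c^\infty(\R^d)$. The function $\Delta_k\varphi$ is bounded (the apparent singularities of~\eqref{lapd} along the reflecting hyperplanes are removable by the smoothness of $\varphi$) and its support is contained in $\bigcup_{w\in W}w(\operatorname{supp}\varphi)$, hence compact, so $\Delta_k\varphi$ is eligible for the previous step. Combining with~\eqref{eq9} gives
$$-\varphi(x)=G^k(\Delta_k\varphi)(x)=\lambda\,G^k(\Delta_k\varphi)(y)=-\lambda\,\varphi(y),$$
so that $\varphi(x)=\lambda\,\varphi(y)$ for every $\varphi\in C_c^\infty(\R^d)$. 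Since $x\ne y$, I can pick $\varphi\in C_c^\infty(\R^d)$ with $\varphi(x)=1$ and $\varphi(y)=0$, which forces $1=0$ and produces the desired contradiction.

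There is essentially no hard step in this strategy; the crucial observation is simply that~\eqref{eq9} is available as a pointwise identity, not merely a distributional one, so that it can be combined with the hypothetical linear relation to force $\varphi(x)=\lambda\,\varphi(y)$ on all test functions. A more ``analytic'' approach, working directly with the integral representation $G^k\varphi(x)=\int G^k(x,y)\varphi(y)w_k(y)\,dy$ and trying to deduce $G^k(x,\cdot)=\lambda G^k(y,\cdot)$ almost everywhere, would require an asymptotic analysis as $|z|\to\infty$ (to identify $\lambda=1$) and a careful discussion of continuity of $G^k(\cdot,z)$ near the orbits $O(x)$ and $O(y)$; the present approach bypasses these subtleties entirely.
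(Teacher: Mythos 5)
Your argument is correct, but it is genuinely different from the proof in the paper. You reduce the separation property to the pointwise inversion formula $G^k(\Delta_k\varphi)=-\varphi$ established as~(\ref{eq9}) in the proof of Theorem~\ref{distr}: assuming a relation $G^k\varphi(x)=\lambda G^k\varphi(y)$ for all $\varphi\in\mathcal B^+(\R^d)$, you extend it by linearity (legitimately, since Proposition~\ref{prop7} guarantees finiteness of the potentials involved) to signed bounded compactly supported functions, apply it to $\Delta_k\varphi$ --- which is indeed bounded with compact support, the apparent singularities in~(\ref{lapd}) along the reflecting hyperplanes being removable for smooth $\varphi$ --- and conclude $\varphi(x)=\lambda\varphi(y)$ for every test function, which is absurd. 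The paper argues instead with the semigroup alone: it picks a nonnegative $f\in C_c(\R^d)$ with $f(x)\neq\lambda f(y)$, uses the strong continuity $\lim_{t\to0}\|P_t^kf-f\|_\infty=0$ to get $P_s^kf(x)\neq\lambda P_s^kf(y)$ for all small $s$, and then exploits the identity $G^kf=\int_0^{t_0}P_s^kf\,ds+G^kP_{t_0}^kf$ from~(\ref{lemme1}) together with the finiteness of $G^kf$ to conclude that at least one of the two nonnegative functions $f$, $P_{t_0}^kf$ already separates. The paper's route is the generic one for transient Feller semigroups and would work even where no explicit fundamental solution is available; yours is shorter and more structural, at the price of making the lemma depend on Theorem~\ref{distr} --- which is harmless here, since that theorem is proved earlier and uses neither the minimum principle nor the balayage structure, so no circularity arises.
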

 \begin{proof}
 Let $\lambda>0$ and $x_1,x_2\in\R^d$ such that $x_1\neq x_2$. Let $f \in C_c(\R^d)$ be  a non negative
function such that $f(x_1)\neq \lambda  f(x_2)$. Since $lim_{t\rightarrow 0}P_t^kf=f$ then there exists $t_0>0$ such that
$$P_s^kf(x)\neq \lambda P_s ^kf(y)\quad  \mbox{ for all } 0 < s < t_0.$$
Moreover, it follows from~(\ref{lemme1}) that for every $z\in\R^d$
\begin{equation}\label{eq19}
G^kf(z)=  G^kP_{t_0}^kf(z)+\int_0^{t_0}P_s^kf(z) ds.\end{equation}
 By Proposition~\ref{prop7}  we see that $G^kf$  is finite on $\R^d$. Whence, (\ref{eq19}) yields that either $G^kf(x)\neq \lambda G^kf(y)$  or $G^kP_{t_0}^kf(x)\neq \lambda G^kP_{t_0}^kf(y)$ .
  \end{proof}
 The following lemma follows from (\ref{eq3}) and Proposition~\ref{prop112}.
 \begin{lemma}\label{lemme4}Let $\varphi$ be a non negative  Borel function on $\R^d$. Then
  $$M_{x,t}(G^k\varphi)\leq G^k\varphi(x),\quad \mbox{for every }x\in\R^d  \mbox{ and }t>0.$$
 \end{lemma}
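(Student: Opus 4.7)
The plan is to represent $G^k\varphi$ via the Green function $G^k(x,y)$ as in \eqref{eq3}, swap the order of integration against the spherical mean measure $\sigma^k_{x,t}$, and then apply the pointwise bound $M_{x,t}(G^k_y)\le G^k(x,y)$ from Proposition~\ref{prop112}.

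More concretely, I would first extend formula \eqref{eq3} from bounded Borel functions with compact support to arbitrary non-negative Borel $\varphi$. To do this, pick an increasing sequence $\varphi_n := \min(\varphi,n)\,\mathbf 1_{B(0,n)}$ of bounded non-negative Borel functions with compact support and converging to $\varphi$. Proposition~\ref{prop7} gives \eqref{eq3} for each $\varphi_n$, and monotone convergence (applied to $P_t^k\varphi_n$ and then to the $t$-integral defining $G^k$, together with monotone convergence in the right-hand side of \eqref{eq3}) yields
$$G^k\varphi(z)=\int_{\R^d}G^k(z,y)\varphi(y)w_k(y)\,dy\qquad\text{for all }z\in\R^d,$$
with the understanding that both sides may equal $+\infty$.

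Next, fix $x\in\R^d$ and $t>0$ and integrate the above identity against $\sigma^k_{x,t}$. Since $G^k(\cdot,\cdot)$, $\varphi$, $w_k$ and $\sigma^k_{x,t}$ are all non-negative, Fubini--Tonelli applies and gives
$$M_{x,t}(G^k\varphi)=\int_{\R^d}\!\int_{\R^d}G^k(z,y)\,d\sigma^k_{x,t}(z)\,\varphi(y)w_k(y)\,dy=\int_{\R^d}M_{x,t}(G^k_y)\,\varphi(y)w_k(y)\,dy.$$
By Proposition~\ref{prop112}, $M_{x,t}(G^k_y)\le G^k(x,y)$ for every $y$, so the last integral is bounded above by $\int_{\R^d}G^k(x,y)\varphi(y)w_k(y)\,dy=G^k\varphi(x)$, which is the desired inequality.

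The whole argument is routine once one has the two ingredients \eqref{eq3} and Proposition~\ref{prop112}; the only point requiring a little care is the extension of \eqref{eq3} to all non-negative Borel $\varphi$ (so that both sides of the inequality make sense simultaneously) and the justification of Fubini--Tonelli, both of which are automatic by non-negativity. There is no real obstacle to overcome here.
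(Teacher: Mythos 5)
Your proof is correct and follows exactly the route the paper intends: the paper simply states that the lemma ``follows from (\ref{eq3}) and Proposition~\ref{prop112}'', and your argument supplies precisely those two ingredients, with the (worthwhile) extra care of extending (\ref{eq3}) to general non-negative Borel functions by monotone convergence and justifying the interchange of integrals by Tonelli.
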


 \begin{theorem}
 \label{prop3}
 Let $\Omega$ be a $W$-invariant bounded open set and let $f$ be a lower semi-continuous function on $\Omega$. Assume that:
 \begin{itemize}
 \item [(a)] For every $z\in\partial \Omega$, $\liminf_{x\rightarrow z}f(x)\geq 0$.
 \item [(b)]For every $x\in \Omega$ and $t>0$ such that $B(x,t)\Subset \Omega$,
 $$M_{x,t}(f)=\int_{\Omega}f(y)d\sigma_{x,t}^k(y)\leq f(x).$$
 \end{itemize}
 Then $f\geq 0$ on $\Omega$.
 \end{theorem}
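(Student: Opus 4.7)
The plan is to argue by contradiction, perturbing $f$ by a strictly super-mean-value auxiliary function in the spirit of Hopf's trick. Suppose $f(x_\ast)<0$ at some $x_\ast\in\Omega$. Choose $R>0$ with $\overline\Omega\subset B(0,R)$ and set $\phi(x):=R^2-|x|^2$, so that $\phi$ is continuous on $\R^d$ and strictly positive on $\overline\Omega$.

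The decisive computation is the identity $M_{x,t}(|\cdot|^2)=|x|^2+t^2$ for every $x\in\R^d$ and $t>0$. To establish it I would first note that, since $|\sigma_\alpha x|=|x|$, the reflection difference terms in~(\ref{lapd}) vanish, yielding $\Delta_k(|\cdot|^2)=2d+2\sum_{\alpha\in R}k(\alpha)=4(\lambda+1)$; approximating $|\cdot|^2$ by compactly supported smooth functions and invoking the heat-equation interpretation of $(P_t^k)_t$ then gives $P_t^k(|\cdot|^2)(x)=|x|^2+4(\lambda+1)t$. Combining this with the radial representation~(\ref{eq1}) (applied to $f=q_t$ and $g=|\cdot|^2$) produces
\[ |x|^2+4(\lambda+1)t \;=\; d_k\int_0^\infty q_t(s)s^{2\lambda+1}M_{x,s}(|\cdot|^2)\,ds, \]
and the ansatz $M_{x,s}(|\cdot|^2)=|x|^2+s^2$ is verified by a short Gaussian moment computation; injectivity of this Laplace-type integral transform makes the solution unique. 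In particular $M_{x,t}(\phi)=\phi(x)-t^2<\phi(x)$.

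Now fix $\epsilon>0$ small enough that $f_\epsilon(x_\ast):=f(x_\ast)+\epsilon\phi(x_\ast)<0$. Then $f_\epsilon$ is lower semi-continuous on $\Omega$, satisfies $\liminf_{x\to z}f_\epsilon(x)\geq\epsilon\phi(z)>0$ for every $z\in\partial\Omega$, and, combining hypothesis (b) for $f$ with the strict inequality for $\phi$,
\[ M_{x,t}(f_\epsilon)\;\leq\; f_\epsilon(x)-\epsilon t^2 \]
whenever $B(x,t)\Subset\Omega$. Extending $f_\epsilon$ lower semi-continuously to the compact set $\overline\Omega$ by $\tilde f_\epsilon(z):=\liminf_{x\to z,\,x\in\Omega}f_\epsilon(x)$ on $\partial\Omega$, the function $\tilde f_\epsilon$ attains its infimum $m$; since $\tilde f_\epsilon\geq 0$ on $\partial\Omega$ but $\tilde f_\epsilon(x_\ast)<0$, one has $m<0$ and the minimum is attained at some interior point $x_0\in\Omega$.

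Choose $t>0$ with $B(x_0,t)\Subset\Omega$. The $W$-invariance of $\Omega$ forces $\overline{B(wx_0,t)}=w\overline{B(x_0,t)}\subset\Omega$ for every $w\in W$, so $\mathrm{supp}\,\sigma^k_{x_0,t}\subset\Omega$. Since $\sigma^k_{x_0,t}$ is a probability measure and $f_\epsilon\geq m$ throughout $\Omega$, $M_{x_0,t}(f_\epsilon)\geq m$, contradicting $M_{x_0,t}(f_\epsilon)\leq f_\epsilon(x_0)-\epsilon t^2=m-\epsilon t^2<m$. Hence $f_\epsilon\geq 0$ on $\Omega$ for every sufficiently small $\epsilon>0$, and letting $\epsilon\to 0^+$ yields $f\geq 0$. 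The main obstacle is the explicit identification of a strictly super-mean-value perturbation: the identity $M_{x,t}(|\cdot|^2)=|x|^2+t^2$ supplies this in a clean form, after which the Hopf-style argument closes with a one-line contradiction.
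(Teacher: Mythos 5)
Your argument is correct in outline but takes a genuinely different route from the paper. The paper's proof extends $f$ lower semi-continuously to $\overline\Omega$, considers the compact set $K$ where the minimum is attained, shows $\sigma^k_{x,t}(K)=1$ at points of $K$, and then runs a Zorn's-lemma argument on minimal ``absorbing'' compact sets, using the linear separation by Green potentials $G^k\varphi$ (Lemma~\ref{lemme3}) together with their super-mean-value property (Lemma~\ref{lemme4}) to contradict minimality. You instead run the classical Hopf perturbation: $\phi=R^2-|\cdot|^2$ is a \emph{strictly} super-mean-value function because $M_{x,t}(|\cdot|^2)=|x|^2+t^2$, so $f+\epsilon\phi$ cannot attain a negative interior minimum. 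This avoids Zorn's lemma and the entire Green-kernel apparatus of Sections~3--4, at the price of having to establish the second-moment identity; your use of the $W$-invariance of $\Omega$ (to guarantee $\mathrm{supp}\,\sigma^k_{x_0,t}\subset\Omega$, hence $M_{x_0,t}(f_\epsilon)\ge m$) is exactly where that hypothesis enters, just as in the paper.

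The one soft spot is your derivation of $M_{x,t}(|\cdot|^2)=|x|^2+t^2$. The chain ``approximate $|\cdot|^2$ by compactly supported functions, invoke the heat-equation interpretation, then invert a Laplace-type transform'' is the weakest link: you would need a uniqueness statement for polynomially bounded solutions of the Dunkl heat equation and then injectivity of $M\mapsto\int_0^\infty q_t(s)s^{2\lambda+1}M(s)\,ds$ on continuous functions of polynomial growth --- both true, but neither is free, and the first is essentially equivalent to what you are trying to prove. A shorter, self-contained proof of the identity is available from the material quoted in Section~2: formula~(\ref{eq6}), extended to polynomials, gives $\tau_x(|\cdot|^2)(z)=|x|^2+|z|^2+2\langle x,V_k(\mathrm{id})(z)\rangle$, and since $y\mapsto\langle x,V_k(\mathrm{id})(y)\rangle$ is homogeneous of degree one while $w_k$ is even, its integral over $S^{d-1}$ against $w_k\,d\sigma$ vanishes; the spherical-mean representation of $M_{x,r}$ then yields $M_{x,r}(|\cdot|^2)=|x|^2+r^2$ directly. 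With that substitution your proof is complete and is a legitimate, more elementary alternative to the one in the paper.
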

 \begin{proof}
 We extend $f$ to a lower semi-continuous function $u$ on $\overline \Omega$ by setting $u=f$ on $\Omega$ and $u(z)=\liminf_{x\rightarrow z}f(x)$ for every $z\in\partial \Omega$. Thus $u\geq 0$ on $\partial \Omega$. Let $\alpha=\inf_{x\in\overline \Omega}u(x)$ and $$K=\{x\in\overline \Omega \quad\mbox{such that } u(x)=\alpha\}.$$
 The set $K$ is not empty because $u$ is lower semi-continuous on the compact set $\overline \Omega$. If $K\cap \partial \Omega \neq \emptyset$ then $\alpha\geq 0$ and so for every $x\in \Omega$, $f(x)=u(x)\geq \alpha\geq 0$.
   Suppose now  that $K\cap\partial \Omega=\emptyset$. Then
   $$K=\{x\in\Omega \quad\mbox{such that } f(x)=\alpha\}.$$
Thus,    for every $x\in K$ there exists $t>0$ such that $B(x,t)\Subset \Omega$ and
  $$\alpha=\int_{\Omega}\alpha d\sigma_{x,t}^k(y)\leq \int_{\Omega}f(y)d\sigma_{x,t}^k(y)\leq f(x)=\alpha.$$
  This means that $\int_{\Omega}(f(y)-\alpha)d\sigma_{x,t}^k(y)=0$
  and consequently $$\sigma_{x,t}^k(K)=1.$$
 Let $\mathcal A$ be the set of all non empty compact subsets $A$ of $\R^d$ such that for every $x\in A$ there exists $t>0$ such that $\sigma_{x,t}^k(A)=1$.
 Clearly $K\in\mathcal A$ and  the set $\mathcal A$ is inductively ordered by the converse inclusion
relation. Hence, by Zorn's lemma,  there exists a minimal set $M\in\mathcal A$ such that $K\supset M$. The set  $M$ contains more than one point because $\sigma_{x,t}^k\neq \delta_x$ (see~(\ref{eq20}))and $\sigma_{x,t}^k(M)=1$   for some $x\in M$ and $t>0$. Then by Lemma~\ref{lemme3} there exists a Borel function $\varphi\in\mathcal B^+(\R^d)$ such that the restriction of $G^k\varphi$  on $M$ is non constant. Let us consider the set
$$ M'=\{x\in M \mbox{ such that } G^k\varphi(x)=\beta\},$$
where $\beta=\inf_{x\in M}G^k\varphi(x)$. Then $M'$ is a non empty compact set (since the function $G^k\varphi$ is lower semi continuous on $\R^d$ by Fatou's Lemma) and $M\supsetneq M'$. Furthermore, let $x\in M'$ and  $t>0$ such that $\overline B(x,t)\subset \Omega$. Then
$$\beta=\int_{M}\beta d\sigma_{x,t}^k(y)\leq \int_{M}G^k\varphi(y)d\sigma_{x,t}^k(y)\leq \int_{\Omega}G^k\varphi(y)d\sigma_{x,t}^k(y).$$
We then deduce, in virtue of  Lemma~\ref{lemme4},  that
$$\beta\leq \int_{\Omega}G^k\varphi(y)d\sigma_{x,t}^k(y)\leq G^k\varphi(x)\leq \beta$$
which implies that $\int_{\Omega}(G^k\varphi(y)-\beta)d\sigma_{x,t}^k(y)=0$ and then $\sigma_{x,t}^k(M')=1$. Consequently, $M'\in\mathcal A$ contradicting the minimality of $M$.

 \end{proof}
 \section{Excessive functions and Balayage space}
  A function $f\in\mathcal B^+(\R^d)$ is said to be excessive if $\sup_{t>0}P_t^kf=f$.
  The set of all excessive functions will be denoted by $E_{\Delta_k}$. Obviously, the constant function~$1$ belongs to $E_{\Delta_k}$.
   Notice that  if $f$ is a Borel non negative function such that $P_t^kf\leq f$ for every $t>0$ then
   $$P_t^kf=P_s^kP_{t-s}^kf\leq P_s^kf\quad \mbox{ for every }0<s<t$$
   which means that the map $t\mapsto P_t^kf$ is decreasing on $]0,\infty[$. This yields that
   $$E_{\Delta_k}=\{f\in\mathcal B^+(\R^d) : P_t^kf\leq f\mbox{ for every }t>0\mbox{ and }\lim_{t\rightarrow 0}P_t^kf=f\}.$$
 Then, for every $y\in \R^d$ the function $G^k(\cdot,y)$ is excessive. Moreover, it follows from~(\ref{lemme1}) that for every non negative Borel function $f$  on $\R^d$, $G^kf$ is also excessive.
 \begin{proposition}\label{prop9} Let $u\in\mathcal B^+(\R^d)$. Then $u$ is excessive if and only if  there  exists a sequence $(f_n)_n$  in $\mathcal B^+(\R^d)$ such that $(G^kf_n)_n$ increase to $u$.
  \end{proposition}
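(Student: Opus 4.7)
Assume $f_n\in\mathcal{B}^+(\R^d)$ and $G^kf_n\uparrow u$. Each $G^kf_n$ is excessive, as remarked just before the proposition. I would then verify that $E_{\Delta_k}$ is stable under increasing pointwise suprema: for an increasing sequence $v_n\uparrow v$ with $v_n\in E_{\Delta_k}$, monotone convergence applied to the kernel $P_t^k$ yields $P_t^kv=\sup_n P_t^kv_n\leq\sup_n v_n=v$, while $\sup_{t>0}P_t^kv\geq\sup_{t>0}P_t^kv_n=v_n$ for each $n$ forces $\sup_{t>0}P_t^kv=v$. Applied to $v_n=G^kf_n$, this shows $u\in E_{\Delta_k}$.

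\textbf{Necessity.} Given $u\in E_{\Delta_k}$, I would introduce the classical Hunt-type approximation
$$f_n:=n\bigl(u-P_{1/n}^ku\bigr)\in\mathcal{B}^+(\R^d),$$
which is Borel and nonnegative since $P_{1/n}^ku\leq u$ by excessiveness. The key computation, via Tonelli and the semigroup relation $P_s^kP_{1/n}^k=P_{s+1/n}^k$, is
$$G^kf_n(x)=n\int_0^\infty P_s^ku(x)\,ds-n\int_0^\infty P_{s+1/n}^ku(x)\,ds=n\int_0^{1/n}P_s^ku(x)\,ds.$$
Equivalently, the identity is a direct consequence of Proposition~\ref{prop8} applied with $\varrho=u$, combined with the commutation $G^kP_{1/n}^ku=P_{1/n}^kG^ku$, namely $G^ku-P_{1/n}^kG^ku=\int_0^{1/n}P_s^ku\,ds$.

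Three verifications then finish the proof: \emph{(i)} $G^kf_n\leq u$, because $P_s^ku\leq u$ for every $s\in(0,1/n)$; \emph{(ii)} the sequence $(G^kf_n)_n$ is increasing, because $s\mapsto P_s^ku(x)$ is decreasing on $(0,\infty)$ (as recorded just before Proposition~\ref{prop9}), so the mean of a decreasing function on the shrinking interval $(0,1/n)$ grows with $n$; \emph{(iii)} $G^kf_n(x)\to u(x)$ as $n\to\infty$, since $P_s^ku(x)\uparrow u(x)$ as $s\downarrow 0$ by the excessiveness of $u$, so the means over intervals contracting to $0$ inherit this limit. The principal technical obstacle is the handling of points where $u$ attains the value $+\infty$, for there the subtraction in the definition of $f_n$ and in the integral identity for $G^kf_n$ requires interpretation. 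I would address this by first establishing the statement for the truncations $u_m:=u\wedge m$, which are finite-valued, and then passing to the limit $m\to\infty$ via monotone convergence together with a diagonal extraction to produce the final approximating sequence $(f_n)$.
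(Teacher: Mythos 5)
Your sufficiency argument is fine and matches the paper. The necessity half, however, has a genuine gap: the identity
\begin{equation*}
G^kf_n(x)=n\int_0^{1/n}P_s^ku(x)\,ds,\qquad f_n:=n\bigl(u-P^k_{1/n}u\bigr),
\end{equation*}
is \emph{false} for a general excessive $u$. Writing $\int_0^T P_t^kf_n\,dt= n\int_0^{1/n}P_t^ku\,dt-n\int_T^{T+1/n}P_t^ku\,dt$ and letting $T\to\infty$, the correct conclusion is $G^kf_n=n\int_0^{1/n}P_t^ku\,dt-\ell$ where $\ell:=\lim_{t\to\infty}P_t^ku\ge 0$ (the limit exists since $t\mapsto P_t^ku$ is decreasing). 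Your formula requires $\ell=0$. The simplest counterexample is $u\equiv 1$, which is excessive because $P_t^k1=1$: there $f_n\equiv 0$, so $G^kf_n\equiv 0$ does not increase to $1$. The same defect invalidates your ``equivalent'' derivation via Proposition~\ref{prop8}, since it manipulates $G^ku-G^kP^k_{1/n}u$, an expression of the form $\infty-\infty$ whenever $G^ku=\infty$ (as for $u\equiv 1$). Your proposed repair --- truncating at levels $u\wedge m$ --- does not help, because constants are precisely the functions for which the construction breaks.

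The paper's proof closes this gap by first fixing a $v\in\mathcal B^+(\R^d)$ with $0<G^kv<\infty$ and setting $u_n=\min\{u,\,n,\,nG^kv\}$, then applying the Hunt approximation to $u_n$ rather than to $u$. The bound $u_n\le nG^kv$ gives $P_t^ku_n\le nP_t^kG^kv\to 0$ as $t\to\infty$ by Proposition~\ref{prop8}, which is exactly the condition $\ell=0$ needed to justify $G^kf_n=n\int_0^{1/n}P_s^ku_n\,ds$; the monotonicity in both indices then yields $G^kf_n\uparrow u$. You would need to incorporate a domination of this kind (by a finite strictly positive potential, not by a constant) for your argument to go through.
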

  \begin{proof} Assume that there exists a non negative sequence $(f_n)_n$ such that $(G^kf_n)_n$  increases to $u$. Then the fact that for every $n$ the function $G^kf_n$ is excessive yields, in  view of the monotone convergence theorem, that $u$ is also excessive. Conversely, assume that $u$ is excessive.
    Let $v\in\mathcal B^+(\R^d)$ such that $0<G^kv<\infty$ (by Proposition~\ref{prop7} such function may exist). For every $n\geq 1$, we define  $$u_n=\min\{u,n,nG^kv\}\quad \mbox{ and } f_n=n(u_n-P_{\frac1n}u_n).$$
  Of course, the sequence $(u_n)_n$ belongs to $\mathcal B^+(\R^d)$ and   increases to $u$. Thus for every $m\geq 1$,  $(P_{\frac1m}u_n)_n$ is an increasing sequence in $\mathcal B^+(\R^d)$.  Further, $u,n$ and $G^kv$ are excessive. So  $(u_n)_n\subset E_{\Delta_k}$ and hence for every $n\geq 1$
    \begin{equation}\label{eq18}
    P^k_{s}u_n=P^k_{t}P^k_{s-t}u_n\leq P^k_{t}u_n\quad\mbox{for every } {0<t<s}.
    \end{equation}
This means that   for every $n\geq1$, $(P_{\frac1m}u_n)_m$ is an increasing sequence  and then
$$\lim_{n\rightarrow \infty}P^k_{\frac 1n}u_n=\lim_{m\rightarrow\infty}\lim_{n\rightarrow\infty}P^k_{\frac{1}{m}}u_n=\lim_{m\rightarrow\infty}P^k_{\frac{1}{m}}u=u.$$
Hence
\begin{equation}\label{eq17}
\lim_{n\rightarrow\infty }n\int_0^{\frac1n}P_s^ku_nds=u
\end{equation}
because, by~(\ref{eq18}),
$$P^k_{\frac1n}u_n\leq n\int_0^{\frac1n}P_s^ku_nds\leq u_n.$$
Obviously, the proof is finished once we have shown that $G^kf_n= n\int_0^{\frac1n}P_s^ku_nds$.
Let $t>0$. Then,
  \begin{eqnarray}
\int_0^{t} P_s^kf_nds&=&n \left(\int_0^{t} P_s^ku_nds-\int_0^t P_{s+\frac1n}^ku_nds\right)\nonumber\\
  &=&n\left(\int_0^tP_s^ku_nds-\int_0^{t+\frac1n} P_s^ku_nds\right)+n\int_0^{\frac 1n}P_s^ku_nds\nonumber\\
  &=&-n\int_t^{t+\frac1n} P_s^ku_nds+n\int_0^{\frac 1n}P_s^ku_nds.\label{equ5}
  \end{eqnarray}
By~(\ref{eq18}), $n\int_t^{t+\frac1n} P_s^ku_nds\leq P_t^ku_n\leq nP_t^kG^kv$ which tends to 0 when $t$ tends to $\infty$ (see Proposition~\ref{prop8}).
Whence, by letting $t$ tends to infinity in~(\ref{equ5}), we obtain  $G^kf_n(x)= n\int_0^{\frac 1n}P_s^ku_nds$ and the proof is finished.

  \end{proof}
 \begin{remark}\label{rem} In virtue of Fatou's lemma, for every $f\in\mathcal B^+(\R^d)$ the function $G^kf$ is lower semi continuous on $\R^d$. Hence, an immediate consequence of the above proposition  is that every excessive function $u$ is lower semi-continuous on $\R^d$ and by  Lemma~\ref{lemme4}, it satisfies
 \begin{equation}\label{eq13}
 M_{x,t}(u)\leq u\quad \mbox{for all }x\in\R^d, t>0.
 \end{equation}
 \end{remark}
 \begin{theorem}\label{balayage}
 The couple $(\R^d, E_{\Delta_k})$ is a balayage space.
 \end{theorem}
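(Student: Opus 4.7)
The plan is to apply a standard characterization of balayage spaces via sub-Markovian semigroups (as in Bliedtner-Hansen's book): a locally compact space with countable base, together with the cone of excessive functions of a suitable sub-Markovian semigroup, forms a balayage space once one checks that (a) the semigroup is strong Feller and strongly continuous on $C_0$ at $t=0$, (b) the potential kernel sends bounded Borel functions with compact support into $C_0$, (c) there is a strictly positive continuous excessive function, (d) the excessive cone is linearly separating, and (e) every excessive function is the pointwise supremum of an increasing sequence of continuous potentials. Since all of these ingredients have been prepared in the earlier sections, the proof should reduce to a short checklist.

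I would organize the verification in the following order. The strong Feller property and Markov property of $(P_t^k)$, together with $\|P_t^k f - f\|_\infty \to 0$ for $f \in C_0(\R^d)$, are properties of the Dunkl semigroup recalled at the opening of Section~3 from Rösler-Voit, so (a) is immediate. Proposition~\ref{prop7} establishes (b) directly: $G^k f \in C_0(\R^d)$ whenever $f$ is a bounded Borel function with compact support. For (c), the identity $P_t^k 1 = 1$ gives $1 \in E_{\Delta_k}$. For (d), Lemma~\ref{lemme3} supplies the linear separation of $\{G^k\varphi : \varphi \in \mathcal B^+(\R^d)\}$ and hence of $E_{\Delta_k}$. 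For (e), Proposition~\ref{prop9} represents every excessive function as an increasing pointwise limit of Green potentials $G^k f_n$ with $f_n \in \mathcal B^+(\R^d)$.

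The main obstacle is this last point. The sequence $(f_n)_n$ produced by Proposition~\ref{prop9} is bounded and nonnegative, but its elements are not \emph{a priori} of compact support, so $G^k f_n$ is not known in advance to lie in $C_0(\R^d)$ and thus in the class of continuous potentials required by the characterization. I would repair this by an additional truncation: set $f_{n,m} := f_n \cdot \chi_{B(0,m)}$, where $\chi_{B(0,m)}$ is the indicator of the ball. Each $f_{n,m}$ is a bounded Borel function with compact support, so Proposition~\ref{prop7} gives $G^k f_{n,m} \in C_0(\R^d)$. By the monotone convergence theorem, the doubly indexed family $(G^k f_{n,m})_{n,m}$ still increases pointwise to the given excessive function as $n,m\to\infty$. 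After this adjustment, every hypothesis of the characterization theorem is in place, and the conclusion $(\R^d, E_{\Delta_k})$ is a balayage space follows. The only genuinely delicate aspect is matching the list of prepared ingredients to whichever equivalent axiom system one adopts, since several formulations of a balayage space coexist in the literature; once a specific formulation is fixed, the verification is a line-by-line citation of the results of Sections~3--5.
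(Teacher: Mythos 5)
You and the paper follow the same overall route: reduce the theorem to the sub-Markov semigroup criterion of Bliedtner--Hansen \cite[V.2.4]{hansen} and check its hypotheses from Sections 3--5, with Lemma~\ref{lemme3} and Proposition~\ref{prop9} giving the linear separation of $E_{\Delta_k}$. However, your checklist does not match the criterion the paper actually uses, and the mismatch both hides the one genuinely new computation in the proof and introduces an error.

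The criterion, as invoked in the paper, requires (beyond the standing semigroup properties and linear separation) the existence of two \emph{strictly positive} functions $u,v\in E_{\Delta_k}\cap C(\R^d)$ with $u/v\in C_0(\R^d)$. Your item (c) supplies only $v=1$; for $u$ one needs a strictly positive, continuous, excessive function vanishing at infinity, and this is precisely what the paper constructs: from~(\ref{eq7}) one gets the closed form $G^k(x,0)=\frac{1}{2d_k\lambda\,|x|^{2\lambda}}$, whence $u:=\min(G^k(\cdot,0),1)$ is continuous (it equals $1$ near the origin since $\lambda>0$), strictly positive, excessive and in $C_0(\R^d)$. You could instead take $u=G^kf$ for some nonnegative $f\in C_c(\R^d)$, $f\not\equiv 0$, via your item (b), but then the strict positivity of $G^kf$ still has to be argued; as written, your proposal never produces such a $u$. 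Separately, your repair of item (e) is incorrect: the functions $f_n=n(u_n-P^k_{1/n}u_n)$ from the proof of Proposition~\ref{prop9} are not monotone in $n$ (only the potentials $G^kf_n$ increase), so the truncated family $(G^kf_{n,m})_{n,m}$ increases in $m$ for each fixed $n$ but is not increasing jointly in $(n,m)$, and its pointwise supremum cannot be turned into the limit of an increasing sequence of continuous potentials by a diagonal argument, since the maximum of two potentials need not be a potential. Fortunately, condition (e) is not among the hypotheses of the criterion the paper cites, so that step can simply be dropped once the $u/v$ condition above is supplied.
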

 \begin{proof} It follows from~Proposition\ref{prop9} and Lemma~\ref{lemme3} that $E_{\Delta_k}$ is linearly separating.
  Then in view of~\cite[V.2.4]{hansen} the proof will be finished once we have shown that there exist positive functions $u,v\in E_{\Delta_k}\cap C(\R^d)$ such that $\frac{u}{v}\in C_0(\R^d)$. To that end, let $v:=1$ and $u:=\min(G^k(\cdot,0),1)$ which are obviously excessive. It is easy to check from~(\ref{eq7}) that
 $$G^k(\cdot,0)=\frac{1}{2d_k\lambda|\cdot|^{2\lambda}}.$$
 Thus $u,v\in E_{\Delta_k}\cap C(\R^d)$ and $\frac uv\in C_0(\R^d)$.
 \end{proof}
 \section{Harmonic measures}
 For every excessive function $u$ and every open set $V$ let
  \begin{equation}\label{reduit}
H_Vu(x)=\inf\{v(x) : v\in E_{\Delta_k}, v\geq u \mbox{ on } V^c\},\quad x\in\R^d.
 \end{equation}
 Obviously $H_Vu(x)=u(x)$ if $x\in V^c$. In virtue of the general theory of balayage spaces, Theorem~\ref{balayage} yields that for each point  $x\in \R^d$ and $V\Subset \R^d$, there exists a unique probability measure $H_V(x,\cdot)$ on $\R^d$ which is supported by $V^c$ such that for every excessive function $u$
 $$H_Vu(x)=\int_{V^c}u(z)H_V(x,dz).$$
 It is clear that if $x\in V^c$ then $H_U(x,\cdot)$ is  the Dirac measure concentrated at~$x$.
  In the sequel,  we denote
  $$H_Vf(x)=\int_{V^c}f(z) H_V(x,dz)$$
  when the integral makes sense. Obviously
 \begin{equation}\label{equ1}
 H_Uf=f\quad \mbox{ on } U^c.
 \end{equation}
  In the following we collect some useful properties of  $H_V$ (see \cite[Chapter III]{hansen} for more details).
  \begin{proposition}\label{prop6}   Let $f:\R^d\rightarrow \overline \R$ be a Borel function and $V$ be a bounded open set.
\begin{enumerate}
   \item  The function $f$ is excessive if and only if $f$ is lower semi-continuous and for every  bounded open set $U$
 $$H_Uf\leq f\mbox{ on }\R^d.$$
   \item If $f$ is bounded and with compact support on $\R^d$ then $H_Vf\in C(V)$.
   \item
   \begin{equation}\label{eq15}
H_UH_Vf=H_Vf\quad \mbox{ for every }U\Subset V.
\end{equation}
 \end{enumerate}
 \end{proposition}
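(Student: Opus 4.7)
The plan is to derive each assertion from the general balayage space framework, which applies here thanks to Theorem~\ref{balayage}. Since $(\R^d, E_{\Delta_k})$ is a balayage space, the operator $H_V$ defined by the reduced-function formula~(\ref{reduit}) coincides with the canonical balayage operator onto $V^c$ studied in \cite[Chapter III]{hansen}, so the three assertions are essentially consequences of the standard theory; the task is to identify the general result behind each of them and to point out where our concrete ingredients plug in.

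For item~(1), the forward implication is immediate: if $f$ is excessive, then it is lower semicontinuous by Remark~\ref{rem}, and $f$ itself is admissible in the infimum~(\ref{reduit}) defining $H_U f$, so $H_U f \leq f$. For the converse, given $f$ lower semicontinuous with $H_U f \leq f$ on every bounded open $U$, the goal is to exhibit a sequence of potentials $G^k\varphi_n$ increasing to $f$, after which Proposition~\ref{prop9} closes the argument. This is done by approximating $f$ from below by continuous compactly supported functions, forming their reduced functions, and using the hypothesis $H_U f \leq f$ together with linear separation (Lemma~\ref{lemme3}) to ensure monotone convergence of the reduced functions up to $f$, a classical balayage-space construction.

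Item~(2) rests on the Corollary following Theorem~\ref{distr}: if $g \in \mathcal B_b(\R^d)$ has compact support and vanishes on $V$, then $G^k g$ is $\Delta_k$-harmonic, hence smooth, on $V$. Writing $H_V f$ as a difference of bounded excessive functions and representing bounded excessive functions that vanish at infinity as potentials of the form $G^k g$ with $g$ supported off $V$, one sees that $H_V f$ coincides on $V$ with a smooth $\Delta_k$-harmonic function, which yields continuity. Finally, item~(3) is the tower property of balayage onto nested complements: when $U \Subset V$ one has $V^c \subset U^c$, so the excessive function $H_V f$ already coincides with itself on $U^c$ and is therefore fixed by the reduction $H_U$, giving $H_U(H_V f) = H_V f$. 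The main obstacle is the faithful identification of our $H_V$ with the abstract balayage operator of \cite{hansen}; this is built into the existence-uniqueness statement of $H_V(x,\cdot)$ recalled just after~(\ref{reduit}), so once the identification is granted each item reduces to a citation.
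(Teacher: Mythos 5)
Your overall strategy is the same as the paper's: Proposition~\ref{prop6} is stated there without proof, as a collection of facts imported from the general theory of balayage spaces once Theorem~\ref{balayage} is in hand, with the reference \cite[Chapter III]{hansen} doing all the work. Your treatment of item (1) in the forward direction (lower semicontinuity from Remark~\ref{rem}, and $H_Uf\leq f$ because $f$ is itself admissible in the infimum~(\ref{reduit})) is correct, and reducing the converse and the other items to the cited theory is exactly what the paper does.

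The problem is that the sketches you attach to items (2) and (3) would not survive scrutiny, and the one for (3) is actually wrong as an argument. You write that $H_Vf$ ``coincides with itself on $U^c$ and is therefore fixed by the reduction $H_U$''; but every function coincides with itself on $U^c$, and $H_U$ is not the identity on such functions --- for an excessive $u$ one only has $H_Uu\leq u$, with strict inequality on $U$ unless $u$ is harmonic there. Moreover $H_Vf$ is in general not excessive for a Borel $f$ (it is a difference of reduced functions), so it is not even an admissible competitor in~(\ref{reduit}) when computing $H_U(H_Vf)$. The real content of~(\ref{eq15}) is that $H_Vf$ is harmonic on $V$ in the balayage sense, equivalently the composition identity $\int_{V^c}H_V(z,\cdot)\,H_U(x,dz)=H_V(x,\cdot)$ for $x\in U\Subset V$; this is a genuine theorem of \cite{hansen}, not a formal consequence of $V^c\subset U^c$. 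Likewise, your route to item (2) relies on representing bounded excessive functions vanishing at infinity as potentials $G^kg$ with $g$ supported off $V$, a representation that is neither established in the paper nor needed: in \cite{hansen} the continuity of $H_Vf$ on $V$ for bounded $f$ with compact support comes from the theory of harmonic kernels directly. If you intend to reduce the proposition to a citation, as the paper does, cite it cleanly rather than supply heuristics that, taken literally, prove false statements.
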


In this section we shall prove that for every bounded $W$-invariant open set~$V$ and every $x\in V$ the  harmonic measure $H_V(x,\cdot)$  is supported by~$\partial V$.
 \begin{lemma}\label{lemme5}Let  $V$ be a bounded $W$-invariant open subset of $\R^d$ and let $u$ be an excessive function locally bounded on $V$  satisfying $M_{x,t}(u)=u(x)$ for every $x\in V$ and $t>0$ such that $B(x,t)\Subset V$. Then $$H_Uu=u\quad\mbox{ for every  } U\Subset V.$$

 \end{lemma}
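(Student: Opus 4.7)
The plan is to obtain the two inequalities $H_U u \le u$ and $H_U u \ge u$ separately, with the minimum principle of Theorem~\ref{prop3} doing the real work in the second. The first is immediate from the definition~(\ref{reduit}): since $u$ itself is excessive and $u \ge u$ on $U^c$, it lies in the infimum class, so $H_U u \le u$ pointwise. On $U^c$, equality holds trivially by~(\ref{equ1}); the whole issue is therefore to prove $H_U u \ge u$ on $U$, and I propose to do this by applying Theorem~\ref{prop3} to $g := H_U u - u$ on the $W$-invariant bounded open set $V$.

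Before applying the minimum principle, I would verify that $g$ is lower semi-continuous on $V$. Proposition~\ref{prop1} applied to $u$ --- using local boundedness on $V$ and the mean value equality $M_{x,t}(u)=u(x)$ whenever $B(x,t)\Subset V$ --- yields $u \in C^2(V)$; in particular $u$ is continuous on $V$. The function $H_U u$ is excessive, being the reduit of the excessive function $u$ in the balayage space $(\R^d, E_{\Delta_k})$ of Theorem~\ref{balayage}, and therefore lower semi-continuous on $\R^d$ by Remark~\ref{rem}. Consequently $g$ is lsc on $V$.

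Next I would check the two hypotheses of Theorem~\ref{prop3} with $\Omega := V$. For (a), since $U \Subset V$, the open set $V\setminus \overline U$ is a neighbourhood of $\partial V$ in $V$ contained in $U^c$, so by~(\ref{equ1}) $H_U u = u$ there; hence $g \equiv 0$ in a neighbourhood of $\partial V$ in $V$, which gives $\liminf_{x\to z} g(x) = 0$ for every $z\in\partial V$. For (b), fix $x\in V$ and $t>0$ with $B(x,t)\Subset V$. The $W$-invariance of $V$ places the support of $\sigma_{x,t}^k$ (contained in $\bigcup_{w\in W}\overline{B(wx,t)}$) inside $V$, where $u$ is bounded; both $u$ and $H_U u \le u$ are therefore $\sigma_{x,t}^k$-integrable on this support. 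Combining the hypothesis $M_{x,t}(u) = u(x)$ with the supermean value inequality~(\ref{eq13}) applied to the excessive function $H_U u$, I obtain
\[
M_{x,t}(g) \;=\; M_{x,t}(H_U u) - u(x) \;\le\; H_U u(x) - u(x) \;=\; g(x).
\]
Theorem~\ref{prop3} then yields $g \ge 0$ on $V$, hence $H_U u \ge u$ on $V$, which completes the proof once combined with the trivial equality on $V^c \subset U^c$ and the opposite inequality.

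The point requiring the most care --- though it is more a formality of the balayage-space framework than a genuine obstacle --- is the assertion that $H_U u$ is itself excessive (and thereby lsc on $\R^d$ and a supermean at every admissible $(x,t)$). This is exactly where the structure provided by Theorem~\ref{balayage} has to be invoked explicitly, citing the relevant machinery from~\cite{hansen}.
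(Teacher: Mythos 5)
Your overall strategy (reduce to the minimum principle of Theorem~\ref{prop3}) is the right one, and your verification of hypotheses (a) and (b) would be fine \emph{if} the function you apply it to were legitimate. The genuine gap is the claim that $H_Uu$ is excessive ``being the reduit of the excessive function $u$''. In a balayage space the reduced function $R^{U^c}_u=\inf\{v\in E_{\Delta_k}: v\ge u \mbox{ on }U^c\}$ is \emph{not} excessive in general: only its lower semi-continuous regularization is, and the two can differ on the set of irregular points of $\partial U$. Concretely, at an irregular point $z\in\partial U$ one has $H_Uu(z)=u(z)$ while $\liminf_{x\to z,\,x\in U}H_Uu(x)$ can be strictly smaller, so $H_Uu$ need not be lower semi-continuous at $z$; nothing in the paper (Proposition~\ref{prop6} included) asserts that $H_U$ maps $E_{\Delta_k}$ into $E_{\Delta_k}$. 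This single unjustified claim is load-bearing twice in your argument: it is what you use to get lower semi-continuity of $g=H_Uu-u$ on $V$ (needed even to invoke Theorem~\ref{prop3}), and it is what you use to apply~(\ref{eq13}) to $H_Uu$ in step (b), since Remark~\ref{rem} establishes the supermean inequality $M_{x,t}(\cdot)\le(\cdot)(x)$ only for functions that are actually excessive. Note also that you cannot repair this by appealing to the conclusion ($H_Uu=u$ continuous), which is exactly what is being proved.

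The paper's proof sidesteps this precisely. Instead of applying the minimum principle to $H_Uu-u$, it fixes an arbitrary competitor $v\in E_{\Delta_k}$ with $v\ge u$ on $U^c$ and applies Theorem~\ref{prop3} to $w=v-u$: each such $v$ genuinely is excessive, hence lower semi-continuous and subject to~(\ref{eq13}), so all hypotheses are verifiable; concluding $v\ge u$ on $U$ for every competitor and taking the infimum in~(\ref{reduit}) gives $H_Uu\ge u$. (A secondary difference: the paper runs the minimum principle on $\Omega=U$, which forces a two-step reduction --- first $W$-invariant $U$, then general $U$ via an intermediate $W$-invariant set $A$ and~(\ref{eq15}) --- whereas your choice $\Omega=V$ is legitimate and would avoid that reduction. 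That part of your plan is a genuine simplification; it just has to be combined with the ``fix a competitor $v$'' device rather than with $H_Uu$ itself.)
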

 \begin{proof} Let $U\Subset V$. Recall from~(\ref{equ1}) and the first statement of Proposition~\ref{prop6}  that $H_Uu=u$ on $U^c$  and that     $H_Uu\leq u$ on $\R^d$.  So, in order to get equality on~$\R^d$, we  need to  prove that $u\leq H_Uu$ on $U$. In virtue of~(\ref{reduit}), it suffices to show  that $u\leq v$ on $U$ for every  $v\in E_{\Delta_k}$ satisfying $v\geq u$ on $U^c$. Let $v$ be a such function and consider $w=v-u$. Since $v$   is lower semi continuous function on~$\R^d$ (Remark~\ref{rem}) and $u$ is continuous on $V$ (Proposition~\ref{prop1}) we deduce that $w$ is lower semi-continuous on $U$ and that for every $z\in\partial U$
  $$\liminf_{x\rightarrow z}w(x)=v(z)-u(z)\geq 0.$$Furthermore, using~(\ref{eq13}), we obtain that for every $x\in U$ and $t>0$ such that $B(x,t)\Subset U$,
  $$M_{x,t}(w)=M_{x,t}(v)-M_{x,t}(u)\leq v(x)-u(x)= w(x).$$
  Assume first that  $U$ is $W$-invariant  then by Proposition \ref{prop3}, $w\geq 0$ on $U$ and consequently $v\geq u$ on $U$ which implies that
  $H_Uu\geq u$ on $U$. Whence $H_Uu=u$ on $\R^d$. Now we turn to the general case where $U$ is arbitrary . Let $A$ be a $W$-invariant open set such that   $U\Subset A\Subset V$. By the preceding part, $H_{A}u=u$ on~$\R^d$. Whence, using~(\ref{eq15}) we obtain
  $$H_Uu=H_UH_{A}u=H_{A}u=u\quad\mbox{ on }\R^d.$$

 \end{proof}

\par It follows from~(\ref{eq8}) that  for every $y\in \R^d$ the function $G^k(\cdot,y)$ is locally bounded on $\R^d\setminus O(y)$,  where $O(y)$ denotes the orbit of $y$ with respect to the group $W$, i.e.,
$$O(y):=\{wy : w\in W\}.$$
Thus, the above lemma as well as Proposition~\ref{prop112} yield that for a fixed   $x\in\R^d$,
 \begin{equation} \label{lemme2}
 \int_{U^c} G^k(\xi,y)H_U(x,d\xi)=G^k(x,y)\quad \mbox{for all } y\in\R^d \mbox{ and } U\Subset \R^d\setminus O(y).
 \end{equation}
 \begin{lemma}\label{lemme6}
 Let $V$ be a $W$-invariant bounded open set and  $\varphi \in C^\infty_c(\R^d)$ such that $\Delta_k\varphi=0$ on $V$. Then $H_U\varphi=\varphi$ for every $U\Subset V$.
 \end{lemma}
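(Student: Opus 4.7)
The plan is to represent $\varphi$ as a difference of two bounded excessive functions of the form $G^kf$, and then apply Lemma~\ref{lemme5} to each summand and invoke the linearity of the harmonic measure $H_U$.

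First, I would recall identity~(\ref{eq9}) established inside the proof of Theorem~\ref{distr}, namely $G^k(\Delta_k\varphi)=-\varphi$ for $\varphi\in C_c^\infty(\R^d)$. This requires that $\Delta_k\varphi$ be a bounded Borel function with compact support: the compact support follows from (\ref{lapd}) since $\Delta_k\varphi(x)$ vanishes once $x$ and all its reflections $\sigma_\alpha(x)$ lie outside $\mathrm{supp}\,\varphi$, and the boundedness follows from the standard cancellation, by a first-order Taylor expansion of $\varphi\circ\sigma_\alpha$, of the apparently singular coefficients $\langle\alpha,x\rangle^{-1}$ and $\langle\alpha,x\rangle^{-2}$ near the reflecting hyperplanes. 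Setting $f_\pm:=(\Delta_k\varphi)^\pm\in\mathcal{B}_b^+(\R^d)$, which have compact support and are finite, I obtain the clean decomposition
$$\varphi \;=\; G^kf_- \;-\; G^kf_+.$$

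Next, I would verify that each of $G^kf_\pm$ fulfills the hypotheses of Lemma~\ref{lemme5}. Excessiveness is immediate from the remark preceding Proposition~\ref{prop9}; local boundedness on $V$ follows from Proposition~\ref{prop7}, which in fact places $G^kf_\pm$ in $C_0(\R^d)$; and the mean-value identity $M_{x,t}(G^kf_\pm)=G^kf_\pm(x)$ on every ball $B(x,t)\Subset V$ is delivered by the Corollary to Theorem~\ref{distr}, whose hypothesis $f=0$ on $V$ is supplied by $\Delta_k\varphi=0$ on the $W$-invariant set $V$. Lemma~\ref{lemme5} then yields, for every $U\Subset V$,
$$H_U(G^kf_\pm) \;=\; G^kf_\pm \quad \text{on } \R^d.$$

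To conclude, I would use that $H_U(x,\cdot)$ is a probability measure, so $H_U$ acts linearly on bounded Borel functions; applying it to the decomposition above gives $H_U\varphi = G^kf_- - G^kf_+ = \varphi$. The only subtlety worth highlighting is the passage from the identity $G^k(\Delta_k\varphi)=-\varphi$ with a signed integrand to the splitting by positive and negative parts, which is legitimate precisely because $\Delta_k\varphi$ is bounded with compact support, so that both $G^kf_+$ and $G^kf_-$ are finite continuous functions by Proposition~\ref{prop7}. Beyond this bookkeeping, all analytic content has already been packaged in Theorem~\ref{distr}, its Corollary, and Lemma~\ref{lemme5}, so no further estimates are required.
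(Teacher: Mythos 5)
Your proof is correct and follows essentially the same route as the paper: both decompose $\varphi=G^k h^- - G^k h^+$ via the identity $G^k(\Delta_k\varphi)=-\varphi$ from~(\ref{eq9}), verify the mean-value property of each piece on $V$ using~(\ref{cor2}), and conclude with Lemma~\ref{lemme5} and the linearity of $H_U$. The additional checks you supply (that $\Delta_k\varphi$ is bounded with compact support, and that $G^kf_\pm$ are finite and continuous) are sound but are left implicit in the paper.
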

 \begin{proof}
 Using (\ref{eq9}), we write $$\varphi=-G^k(\Delta_k\varphi)=G^kh^--G^kh^+$$ where $h^-=\max(0,-\Delta_k\varphi)$ and $h^+=\max(0,\Delta_k\varphi)$. Clearly, $G^kh^-$ and $G^kh^+$ are excessive (Proposition~\ref{prop9}) and $h^+=h^-=0$ on $V$. Hence, in view of~(\ref{cor2}), for every $y\in V$ and $t>0$ such that $B(y,t)\Subset V$,
    $$M_{y,t}(G^kh^-)=G^kh^-(y)\quad \mbox{and }\quad M_{y,t}(G^kh^+)=G^kh^+(y).$$ This leads, by Lemma~\ref{lemme5}, to
    $$H_U(G^kh^-)=G^kh^-\quad\mbox{ and }\quad H_U(G^kh^+)=G^kh^+$$ and so $H_U\varphi=\varphi$ for every $U\Subset V$.

 For an open subset $U$ of $\R^d$
 it will be convenient to denote by $^W\!\!U$   the smallest $W$-invariant open set containing $U$, i.e.
  $$^W\!\!U:=\bigcup_{w\in W}w(U).$$
    Of course, if $U\Subset A$ for some $W$-invariant open set $A$ then $ \overline{^W\!\!U}\subset A$.

  \begin{proposition}\label{prop5}
  Let $U$ be a bounded open set. For every $x\in U$
      $$\mbox{supp }H_U(x,\cdot)\subset \overline{^W\!\!U}\setminus U,$$
  In particular, if $U$ is $W$-invariant then for every $x\in U,$
  $$\mbox{ supp }H_U(x,\cdot)\subset \partial U.$$
  \end{proposition}
  \begin{proof}
  For every $n\geq 1$ we define
 $$  U_n=\{y\in\R^d : \inf_{x\in ^W\!\!U}|x-y|<\frac1n\}.$$
 Obviously,  $U_n$ is a $W$-invariant  open set, $^W\!\!U\Subset U_n$ and  $U\Subset U_n$ for all $n\geq 1$. Furthermore $\overline{^W\!\!U}=\cap_{n\geq1}U_n.$    Let $n\geq 1$ and  $\varphi\in C^\infty_c(\R^d)$ such that $\varphi=0$ on $U_n$. Then by the above lemma,  $H_U\varphi=\varphi=0$ on $U$. That is for every $x\in U$
    $$\int_{U^c}\varphi(y)H_U(x,dy)=0,$$
    which means that $\mbox{supp }H_U(x,\cdot)\subset \overline U_n$ and consequently  $\mbox{supp }H_U(x,\cdot)\subset \overline {^W\!\!U}$ (because $\overline{^W\!\!U}=\cap_{n\geq 1}U_n.$). Finally recall that the support of  $H_U(x,\cdot)$ is supported by $U^c$ to conclude.
  \end{proof}
\begin{corollary}\label{coro1} Let $U$ be a bounded open set. Then $H_Uf\in C(U)$ provided $f\in \mathcal B_b(^W\!\!U\setminus U)$.
In particular, if $U$ is $W$-invariant then
\begin{equation}\label{eq16}
H_Uf\in C(U)\quad\mbox{for every }f\in\mathcal B_b(\partial U).
\end{equation}
\end{corollary}
\begin{proof} Let $f\in \mathcal B_b(^W\!\!U\setminus U)$.
We may extend $f$ to $\tilde f$ on $\R^d$ by setting $\tilde f=f$ on $^W\!\!U\setminus U$ and $\tilde f=0$ otherwise. So $\tilde f$ is a Borel Bounded function on $\R^d$ with compact  support. Moreover using  the above theorem we see that  $H_Uf=H_U\tilde f$ which is continuous on $U$ by  the second statement of Proposition~\ref{prop6}.
\end{proof}

 \end{proof}

 \section{Dirichlet problem}
\label{sec3}
\par In all this section  $V$  denotes a $W$-invariant bounded open set. A sequence  $(x_n)_n\subset V$ converging to a point $z\in \partial V$ is said  regular with respect to $V$ provided
$$\lim_{n\rightarrow \infty }H_Vf(x_n)=f(z),\quad\mbox{for every }f\in C_c(\R^d) .$$
A point $z\in\partial V$ is called regular if every sequence $(x_n)_n$ on $V$ converging to $z$ is regular. The open set $V$ is called regular if every $z\in\partial V$ is regular.
\par This section is devoted to prove that  for every continuous function $f$on $V$, $f$ is $\Delta_k$-harmonic on $V$ if and only if $H_Uf=f$ for every $U\Subset V$. Furthermore, assuming that  $V$ is regular  we shall show that  the function $H_Vf$ is  the unique solution $u\in C^2(V)\cap C(\overline V)$ to the Dirichlet problem
$$\left\{\begin{array}{rcl}
\Delta_ku&=&0 \quad\textrm{ on~}V,\\
u&=&f\quad\textrm{ on~}\partial V.
\end{array}\right.$$
It will be commode to denote  $G^k_y$  for the Green function $G^k(\cdot,y)$. For every $n\geq 1$ let
\begin{equation}\label{suitee}
V_n=\{z\in V : B(z,\frac1n)\Subset V\}.
\end{equation}
It is clear that $(V_n)_{n\geq 1}$ is an increasing sequence of $W$-invariant open sets satisfying  $V_{n}\Subset V_{n+1}\Subset V$ for every $n\geq1$ and  $V=\bigcup_{n\geq 1} V_n$.

 \begin{proposition}\label{theorem} Let $f$ be a continuous function on $V$. If $f$ is $\Delta_k$-harmonic on~$V$ then $H_Uf=f$ for every $U\Subset V$.
\end{proposition}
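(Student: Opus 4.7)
The plan is to reduce the assertion to Lemma~\ref{lemme6} by replacing $f$ with a smooth, compactly supported function that agrees with $f$ on a $W$-invariant neighborhood of $\overline{^W\!\!U}$.

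Fix $U\Subset V$. Since $V$ is $W$-invariant, $\overline{^W\!\!U}$ is a compact $W$-invariant subset of $V$; choosing $\varepsilon\in(0,\tfrac12\dist(\overline{^W\!\!U},V^c))$ and setting
$$A:=\{x\in\R^d : \dist(x,\overline{^W\!\!U})<\varepsilon\},$$
I obtain a bounded $W$-invariant open set with $\overline{^W\!\!U}\subset A$ and $\overline A\subset V$ (the $W$-invariance of $A$ follows because every $\sigma_\alpha$ is a Euclidean isometry leaving $\overline{^W\!\!U}$ invariant). Because $V$ is $W$-invariant and $f\in C^2(V)$ satisfies $\Delta_k f=0$, the hypoellipticity of $\Delta_k$ recalled in Section~2 upgrades $f$ to $C^\infty(V)$. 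I then pick a cutoff $\varphi\in C_c^\infty(V)$ with $\varphi\equiv 1$ on a neighborhood of $\overline A$, and set $g:=\varphi f$, extended by zero outside $V$, so $g\in C_c^\infty(\R^d)$.

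I next verify that $\Delta_k g\equiv 0$ on $A$. For any $x\in A$, $\varphi\equiv 1$ in a neighborhood of $x$, hence $\nabla g(x)=\nabla f(x)$ and $\Delta g(x)=\Delta f(x)$; moreover $\sigma_\alpha x\in A$ by $W$-invariance of $A$, so $g(\sigma_\alpha x)=f(\sigma_\alpha x)$. The explicit formula (\ref{lapd}) then gives $\Delta_k g(x)=\Delta_k f(x)=0$ throughout $A$. Applying Lemma~\ref{lemme6} with $A$ in the role of $V$ (and using $U\Subset A$), I obtain $H_U g=g$ on $\R^d$.

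To conclude, recall from Proposition~\ref{prop5} that $\mbox{supp}\,H_U(x,\cdot)\subset\overline{^W\!\!U}\subset A$, where $g$ coincides with $f$. Hence for every $x\in U$,
$$H_U f(x)=\int_{\overline{^W\!\!U}\setminus U}f(y)\,H_U(x,dy)=\int_{\overline{^W\!\!U}\setminus U}g(y)\,H_U(x,dy)=H_U g(x)=g(x)=f(x),$$
and on $V\setminus U$ the equality is trivial since $H_U(x,\cdot)=\delta_x$. The main obstacle is the non-locality of $\Delta_k$: because $\Delta_k g(x)$ depends on the values of $g$ at the reflected points $\sigma_\alpha x$, the intermediate neighborhood $A$ on which $g\equiv f$ must itself be $W$-invariant, for otherwise the identity $\Delta_k g=\Delta_k f$ could fail on $A$ and Lemma~\ref{lemme6} would not be applicable.
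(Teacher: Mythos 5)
Your proof is correct and follows essentially the same route as the paper: the paper takes the $W$-invariant set $V_{n_0}$ from its exhaustion of $V$ where you construct the $\varepsilon$-neighborhood $A$ of $\overline{^W\!\!U}$, but both arguments reduce to Lemma~\ref{lemme6} via a function in $C_c^\infty(\R^d)$ agreeing with $f$ on a $W$-invariant neighborhood of $\overline{^W\!\!U}$, and then invoke Proposition~\ref{prop5} on the support of $H_U(x,\cdot)$. Your explicit verification that $\Delta_k g=\Delta_k f$ on $A$ requires the $W$-invariance of $A$ is a point the paper uses implicitly but does not spell out.
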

\begin{proof}
Assume that $f$ is $\Delta_k$-harmonic on $V$. It follows from the hypoellipticity of the operator~$\Delta_k$ that $f\in C^\infty(V)$. Let $U\Subset V$. Let $n_0\geq 1$ such that $U\Subset V_{n_0}$ and  consider $\varphi\in C^\infty_c(\R^d)$ such that $\varphi=f$ on $V_{n_0}$. Then $\Delta_k\varphi=0$ on $V_{n_0}$ and so, in view of Lemma~\ref{lemme6}, $ H_U\varphi=\varphi$. On the other hand, $H_U\varphi=H_Uf$ on $U$ since  $V_{n_0}$ contains the support of the measure $H_U(x,\cdot)$ for every $x\in U$ (see Proposition~\ref{prop5}). Whence $H_Uf=\varphi=f$ on $U$. The equality on $U^c$ follows from~(\ref{equ1}).
\end{proof}

\begin{lemma}
For each $y\in V$, the function $u:=\lim_{n\rightarrow \infty}H_{V_n}G^k_y$ belongs to  $C_b(V)$ and satisfies
$$H_Uu=u\quad\mbox{ for every }U\Subset V.$$
\end{lemma}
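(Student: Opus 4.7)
The plan is to identify $v_n := H_{V_n}G^k_y$ as a decreasing, uniformly bounded sequence whose pointwise limit $u$ automatically inherits the invariance $H_Uu = u$ for every $U \Subset V$ via dominated convergence, with continuity then extracted from the regularity statement Proposition~\ref{prop6}(2).

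First I would establish monotonicity and a uniform bound. For $n < m$ one has $V_n \Subset V_m$, so Proposition~\ref{prop6}(3) would give $H_{V_n}v_m = H_{V_n}H_{V_m}G^k_y = v_m$; since $v_m \leq G^k_y$ by the defining infimum~(\ref{reduit}) (with $G^k_y$ itself as an admissible competitor) and $H_{V_n}$ is order-preserving on excessive functions, this implies $v_m = H_{V_n}v_m \leq H_{V_n}G^k_y = v_n$. For the uniform bound I would exploit that $W$-invariance and boundedness of $V$ place the compact orbit $O(y)$ at positive distance $\delta := d(O(y),V^c) > 0$ from $V^c$. The definition~(\ref{suitee}) forces $d(\xi,V^c) \leq 1/n$ for every $\xi \in V_n^c$, so $d(\xi,O(y)) \geq \delta/2$ on $V_n^c$ once $n$ is large enough, and the estimate~(\ref{eq8}) then supplies a constant $C$ with $G^k(\xi,y) \leq C$ on $V_n^c$. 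Since $H_{V_n}(x,\cdot)$ is a probability measure supported in $V_n^c$, this yields $v_n \leq C$ on $\R^d$, and the limit $u := \lim_n v_n$ is well-defined, bounded, and Borel.

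Next I would derive $H_Uu = u$ for a fixed $U \Subset V$. Choosing $n_2$ with $U \Subset V_{n_2}$, Proposition~\ref{prop6}(3) gives $H_Uv_n = v_n$ for all $n \geq n_2$. With $v_n \downarrow u$ dominated by the bounded $v_{n_2}$ against the probability measure $H_U(x,\cdot)$, dominated convergence then yields
$$H_Uu(x) = \lim_n H_Uv_n(x) = \lim_n v_n(x) = u(x).$$
To conclude continuity I would truncate to $\tilde u := u\,\mathbf{1}_{\overline{^W\!\!U}}$, a bounded Borel function with compact support; Proposition~\ref{prop5} places the support of $H_U(x,\cdot)$ in $\overline{^W\!\!U}\setminus U$, so $H_U\tilde u = H_Uu$, and Proposition~\ref{prop6}(2) delivers $H_Uu \in C(U)$. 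Hence $u = H_Uu \in C(U)$ for every $U \Subset V$; covering $V$ by the sets $V_n \Subset V$ gives $u \in C(V)$, and combined with the bound from Step~1, $u \in C_b(V)$.

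The delicate point is the uniform bound in Step~1: it is precisely this estimate that keeps $u$ finite near the orbit $O(y)$ and legitimizes the interchange of limit and integral in Step~2. Everything else reduces to the tower property~(\ref{eq15}) and the two regularity assertions in Proposition~\ref{prop6}.
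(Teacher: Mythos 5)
Your proposal is correct and follows essentially the same route as the paper: the same monotonicity argument via~(\ref{eq15}) and $H_{V_m}G^k_y\leq G^k_y$, the same uniform bound from~(\ref{eq8}) using that the supports of the harmonic measures stay at positive distance from $O(y)$, the same passage to the limit in $H_Uv_n=v_n$ (you use dominated convergence where the paper telescopes and uses monotone convergence, an immaterial difference), and the same continuity argument via truncation to $\overline{^W\!\!U}$, Proposition~\ref{prop5} and Proposition~\ref{prop6}(2), which is exactly Corollary~\ref{coro1}.
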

\begin{proof}
 Let $n_0\geq 1$ and $\varepsilon >0$ such that   $B(y,\varepsilon)\Subset V_{n_0}$ and so $^W\!\!B(y,\varepsilon)\Subset V_{n_0}$.  Let $n>n_0$ and let $x\in V_n$.
Then for every $\xi\in \partial V_n$, $|\xi-wy|>\varepsilon$ for all $w\in W$ which implies in virtue of~(\ref{eq8}) that
$$H_{V_n}G^k_y(x)=\int_{\partial V_n} G^k(y,\xi) H_{V_n}(x,d\xi)\leq \frac{\varepsilon^{-2\lambda}}{2d_k\lambda}.$$
So by letting $n$ tends to infinity we easily see that $u$ is bounded on $V$. Let now $U\Subset V$, then there exists $n_1\geq 1$ such that $U\Subset V_n$ for every $n\geq n_1$. Therefore, by~(\ref{eq15}), for every $x\in U$
\begin{eqnarray*}
H_{V_{n_1}}G^k_y(x)-H_Uu(x)&=& H_U(H_{V_{n_1}}G^k_y-u)(x)\\
&=&\int_{U^c} \lim_{n\rightarrow \infty}(H_{V_{n_1}}G^k_y-H_{V_n}G^k_y)(\xi)H_U(x,d\xi).
\end{eqnarray*}
The sequence $(H_{V_n}G^k_y)_{n\geq n_1}$ is  decreasing. Indeed, Since $G^k_y$ is excessive it follows from Proposition~\ref{prop6} that
$H_{V_{n+1}}G^k_y\leq G^k_y$. Applying $H_{V_{n}}$ and  using~(\ref{eq15}),  we get $H_{V_{n+1}} G^k_y\leq H_{V_{n}}G^k_y$ for every $n\geq n_1$. Whence
$(H_{V_{n_1}}G^k_y-H_{V_n}G^k_y)_{n\geq n_1}$ is a non negative increasing sequence. Then by the monotone convergence theorem
\begin{eqnarray*}
H_{V_{n_1}}G^k_y(x)-H_Uu(x)&=& \lim_{n\rightarrow \infty} \int_{U^c}(H_{V_{n_1}}G^k_y-H_{V_n}G^k_y)(\xi)H_U(x,d\xi)\\
&=& \lim_{n\rightarrow \infty}H_U(H_{V_{n_1}}G^k_y-H_{V_n}G^k_y)(x)\\
&=&\lim_{n\rightarrow \infty}(H_{V_{n_1}}G^k_y-H_{V_n}G^k_y)(x)\\
&=&H_{V_{n_1}}G^k_y(x)- u(x).
\end{eqnarray*}
This means that $H_Uu(x)=u(x)$. Hence $H_Uu=u \mbox{ on }U$ and then on $\R^d $ (using~\ref{equ1}).
In particular  for every $n\geq 1$, $H_{V_n}u=u$ on $V_n$. Since  $H_{V_n}u$ is continuous  on $V_n$ by Corollary~\ref{coro1}, it follows that $u$ is continuous  on $V_n$ for every $n$ and then $u$ is continuous  on $V$.
\end{proof}

\begin{lemma}
Let $y\in V$ and $(V_n)_n$ be as in~(\ref{suitee}).
Then
$$H_VG^k_y=\lim_{n\rightarrow\infty }H_{V_n}G^k_y.$$
\end{lemma}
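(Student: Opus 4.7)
The plan is to prove both inequalities $H_V G^k_y \le u$ and $u \le H_V G^k_y$ on $\R^d$, where $u:=\lim_{n\to\infty}H_{V_n}G^k_y$. The easy direction is immediate: since $G^k_y$ is excessive it is a competitor in the reduite~(\ref{reduit}), so $H_VG^k_y\le G^k_y$; applying the positive kernel $H_{V_n}$ and invoking~(\ref{eq15}) (valid because $V_n\Subset V$) gives $H_VG^k_y=H_{V_n}H_VG^k_y\le H_{V_n}G^k_y$ for every $n$, whence $H_VG^k_y\le u$ on $\R^d$ after passing to the limit.

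For the reverse inequality I would apply a maximum-principle argument to $g:=u-H_VG^k_y$. The key preparatory step is to show that both $u$ and $H_VG^k_y$ extend continuously from $V$ to $\overline V$ with common boundary value $G^k_y|_{\partial V}$. (Continuity of $H_VG^k_y$ on $V$ itself follows from Proposition~\ref{prop6}(2) applied to $G^k_y\mathbf{1}_{\partial V}$, which is bounded with compact support because $O(y)\subset V$ is disjoint from the compact set $\partial V$.) For the upper bound on boundary limits: fix $z\in\partial V$ and $n\ge 1$; since $\overline{V_n}\subset V$ is at positive distance from $z$, any $x\in V$ close enough to $z$ lies in $V_n^c$, so $H_{V_n}G^k_y(x)=G^k_y(x)\to G^k_y(z)$. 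Using $u\le H_{V_n}G^k_y$ and $H_VG^k_y\le H_{V_n}G^k_y$, I then deduce $\limsup_{x\to z}u(x)\le G^k_y(z)$ and likewise for $H_VG^k_y$. For the lower bound, $H_VG^k_y$ is excessive hence lower semicontinuous (Remark~\ref{rem}) with $H_VG^k_y(z)=G^k_y(z)$, so $\liminf_{x\to z}H_VG^k_y(x)\ge G^k_y(z)$; combined with $u\ge H_VG^k_y$ this forces $\liminf_{x\to z}u(x)\ge G^k_y(z)$ as well. Both extensions therefore match $G^k_y$ continuously on $\partial V$.

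Consequently $g$ is continuous on $\overline V$, vanishes on $\partial V$, and is non-negative on $V$. Both $u$ and $H_VG^k_y$ satisfy $H_Uf=f$ for every $U\Subset V$ (the preceding lemma for $u$, and~(\ref{eq15}) for $H_VG^k_y$), so $H_Ug=g$. Assume for contradiction that $M:=\max_{\overline V}g>0$; then $M$ is attained at some $x_0\in V$, and choosing $n$ large enough that $x_0\in V_n$, the identity $g(x_0)=H_{V_n}g(x_0)=\int g\,dH_{V_n}(x_0,\cdot)$, together with $g\le M$ and with $H_{V_n}(x_0,\cdot)$ a probability measure supported in $\partial V_n$ (Proposition~\ref{prop5}), forces $g\equiv M$ on $\mathrm{supp}\,H_{V_n}(x_0,\cdot)$, producing some $x_n\in\partial V_n$ with $g(x_n)=M$. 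From the definition~(\ref{suitee}) of $V_n$ we have $d(x_n,V^c)=1/n\to 0$, so up to a subsequence $x_n\to z\in\partial V$, and continuity of $g$ yields $M=g(z)=0$, the desired contradiction. The main obstacle is the boundary-continuity step: coordinating the lower semicontinuity of the excessive function $H_VG^k_y$ with the explicit collapse $H_{V_n}G^k_y=G^k_y$ on $V_n^c$ to pin down both $\liminf$ and $\limsup$ at each $z\in\partial V$; once this is in hand, the maximum-principle propagation along the exhaustion $(V_n)$ is essentially formal.
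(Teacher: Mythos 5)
Your first inequality ($H_VG^k_y\le u$, via excessivity of $G^k_y$ and the iteration identity~(\ref{eq15})) is exactly the paper's argument, and the propagation-along-$\partial V_n$ device at the end is sound as far as it goes. The genuine gap is in the boundary step, and it is precisely the step you flag as the main obstacle: your claim that $H_VG^k_y$ is excessive, hence lower semicontinuous by Remark~\ref{rem}, hence satisfies $\liminf_{x\to z}H_VG^k_y(x)\ge H_VG^k_y(z)=G^k_y(z)$ at \emph{every} $z\in\partial V$, is false in general. The function $H_Vw$ for $w$ excessive is the unregularized reduite~(\ref{reduit}): it equals $w$ on $V^c$, while on $V$ it agrees with the lower semicontinuous regularization; at an irregular boundary point these two values differ, so $H_Vw$ fails to be lower semicontinuous there and is not excessive. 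Concretely, take $V=B\setminus O(y_0)$ for a ball $B$ and a point $y_0\in B$ off the reflecting hyperplanes: since $G^k$ is singular on the diagonal (see~(\ref{eq7})), the finite set $O(y_0)$ is polar, each of its points is an irregular boundary point of $V$, and one checks that $H_VG^k_y=H_BG^k_y$ on $V$ while $H_VG^k_y(y_0)=G^k_y(y_0)>H_BG^k_y(y_0)=\lim_{x\to y_0,\,x\in V}H_VG^k_y(x)$. The lemma is stated for an arbitrary bounded $W$-invariant $V$ (regularity is only assumed later, in Theorem~\ref{thpbd}), so you cannot assume this situation away; and without the boundary identification $\lim_{x\to z}g(x)=0$ at \emph{all} $z\in\partial V$, your maximum-principle argument collapses, because the points $x_n\in\partial V_n$ with $g(x_n)=M$ may accumulate at an irregular boundary point where continuity of $g$ up to $\overline V$ is exactly what is in doubt.

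The paper avoids this trap by not attempting an elementary maximum principle on $\overline V$: it applies the general minimum principle for balayage spaces (\cite[III.4.3]{hansen}) to $v=H_VG^k_y-u$, which requires $v$ lower semicontinuous and bounded below on $V$, $v\ge 0$ on $V^c$, $H_Uv\le v$ for $U\Subset V$, and the boundary condition $\liminf_m v(x_m)\ge 0$ \emph{only along regular sequences} $(x_m)_m$; the irregular part of $\partial V$ is absorbed by the general theory. Along a regular sequence the limit $H_Vf(x_m)\to f(z)$ is available by definition, which is how the paper gets $H_VG^k_y(x_m)\to G^k_y(z)$ without any semicontinuity claim for $H_VG^k_y$ on $V^c$. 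To repair your proof you would either have to restrict to regular $V$ (weakening the lemma, and thereby breaking its later use) or replace your maximum principle by the cited balayage-space minimum principle, at which point you recover the paper's proof.
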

\begin{proof}
 Let us denote $u=\lim_{n\rightarrow \infty}H_{V_n}G^k_y.$ Since $G^k_y$ is excessive it follows from Proposition~\ref{prop6} that
$H_{V}G^k_y\leq G^k_y$. Applying $H_{V_n}$ and  using~(\ref{eq15}),  we get $H_{V} G^k_y\leq H_{V_n}G^k_y$ for every $n\geq n_0$. Let $n$ tends to infinity to obtain
\begin{equation}
H_VG^k_y\leq u.
\end{equation}
To prove the converse equality, we denote $v=H_VG^k_y-u$ and we intend to show that $v\geq 0$ on $V$. Using a general minimum principle of balayage spaces (see \cite[III.4.3]{hansen}) it will be sufficient  to  show that $v$ is lower semi continuous on $V$, $v\geq 0$ on $V^c$, $\inf v(V)>-\infty$, $H_Uv\leq v$ for every $U\Subset V$ and that $\liminf_{n\rightarrow\infty}v(x_n)\geq 0$ for every regular sequence $(x_m)_m$ on $V$.\\
In view of the above lemma, it is clear that $v$ is continuous on $V$, $\inf v(V)>-\infty$ and $H_Uv\leq v$ for every $U\Subset V$. Moreover, for every $n\geq 1$, $H_VG^k_y=G^k_y=H_{V_n}G^k_y$ on $V^c$. This yields that $v=0$ on $V^c$. Finally, let $(x_m)_m$ be a regular sequence on $V$ converging to $z\in\partial V$. Let $f\in C_c(\R^d)$ such that $f=G^k_y$ on $\partial V$. By Proposition~\ref{prop5}, $H_{V}G^k_y=H_{V}f$ on $V$ and then
$$\lim_{m\rightarrow\infty }H_{V}G^k_y(x_m)=\lim_{m\rightarrow\infty }H_{V}f(x_m)=f(z)=G^k_y(z).$$
Furthermore,
$$\lim_{m\rightarrow\infty } u(x_m)\leq \lim_{m\rightarrow\infty }G^k_y(x_m)=G^k_y(z).$$
Whence $\liminf_{n\rightarrow\infty}v(x_n)\geq 0$.  This implies that $v\geq 0$ and finishes the proof.
\end{proof}
\begin{lemma} For every $x,y\in \R^d\setminus~\partial V$
\begin{equation}\label{symetri}
H_VG^k_y(x)=H_VG^k_x(y).
\end{equation}
In particular if $y\in \overline V^c$, then $H_VG^k_y=G^k_y$.
\end{lemma}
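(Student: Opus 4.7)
The lemma splits into two parts and I would handle the ``in particular'' statement first, then deduce the symmetry from it by cases.

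For the ``in particular'' statement, since $V$ is $W$-invariant, so is $\overline V$, and thus $y \in \overline V^c$ forces $O(y) \cap \overline V = \emptyset$, giving $V \Subset \mathbb R^d \setminus O(y)$. Then \eqref{lemme2} applied with $U = V$ immediately yields
\[
H_V G^k_y(x) \;=\; \int_{V^c} G^k(\xi,y)\,H_V(x,d\xi) \;=\; G^k(x,y) \;=\; G^k_y(x) \qquad \text{for every } x \in \mathbb R^d.
\]

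For the general symmetry, I would split into cases according to whether each of $x, y$ lies in $V$ or in $\overline V^c$. If at least one of them, say $y$, is in $\overline V^c$, the ``in particular'' part gives $H_V G^k_y(x) = G^k(x,y)$, while $H_V G^k_x(y) = G^k_x(y) = G^k(x,y)$ holds trivially because $y \in V^c$ (the subcase $x \in \overline V^c$ is symmetric). This leaves the core case $x, y \in V$: for $n_0$ large enough, $x, y \in V_n$ for all $n \ge n_0$, and by the preceding lemma $H_V G^k_y(x) = \lim_n H_{V_n} G^k_y(x)$, and likewise after swapping $x$ and $y$. So the problem reduces to proving the finite-level symmetry $H_{V_n} G^k_y(x) = H_{V_n} G^k_x(y)$ for $x, y \in V_n$.

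The main obstacle is this last identity, which reflects the self-adjointness of $(P_t^k)_t$ on $L^2(w_k(x)\,dx)$ coming from the symmetry of $p_t^k$ noted in Section~3. The standard way to obtain it is to introduce the semigroup $(P_t^{V_n})_t$ killed at the first exit from $V_n$: its transition density $p_t^{V_n}$ remains symmetric on $V_n \times V_n$, so the associated Green kernel $G^k_{V_n}(x,y) := \int_0^\infty p_t^{V_n}(x,y)\,dt$ is symmetric. Combining the symmetry of $G^k$ and $G^k_{V_n}$ with the Hunt-type decomposition
\[
G^k(x,y) \;=\; G^k_{V_n}(x,y) + H_{V_n} G^k_y(x), \qquad x,y \in V_n,
\]
then forces $H_{V_n} G^k_y(x) = H_{V_n} G^k_x(y)$. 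Making this construction precise inside the paper's balayage-theoretic setup, or equivalently checking directly via Fubini and the duality \eqref{eq10} that the kernel $(x,y) \mapsto G^k(x,y) - H_{V_n} G^k_y(x)$ is symmetric on $V_n \times V_n$, is the technical heart of the proof.
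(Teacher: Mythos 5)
Your treatment of the ``in particular'' statement and of the cases where one of $x,y$ lies in $\overline V^c$ is correct and matches the paper. The reduction of the remaining case $x,y\in V$ to the finite-level identity $H_{V_n}G^k_y(x)=H_{V_n}G^k_x(y)$ via the preceding lemma is also fine. The genuine gap is that this finite-level identity --- which you yourself call the technical heart --- is never actually proved. The route you sketch (a killed semigroup $(P_t^{V_n})_t$ with symmetric density and the Hunt decomposition $G^k=G^k_{V_n}+H_{V_n}G^k$) lives outside the paper's framework: here $H_{V_n}$ is defined purely by the r\'eduite \eqref{reduit}, and identifying it with the exit distribution of a killed Dunkl process, and proving symmetry of the killed density, would each require a substantial construction that the paper neither carries out nor cites. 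Likewise, the suggested ``direct check via Fubini and \eqref{eq10}'' is not available, because $H_{V_n}$ has no explicit integral representation against which Fubini could be applied.

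The paper closes this gap by a quite different and more economical argument that never needs the finite-level symmetry. For $y\in V$ fixed, set $u(\eta):=H_VG^k_\eta(y)=\int_{\partial V}G^k(\xi,\eta)\,H_V(y,d\xi)$. Since each $\xi\in\partial V$ has its orbit $O(\xi)$ contained in $\partial V$ (by $W$-invariance of $V$), one has $V_n\Subset\R^d\setminus O(\xi)$, so \eqref{lemme2} gives $H_{V_n}G^k_\xi=G^k_\xi$, whence $H_{V_n}u=u$ on $\R^d$. On the other hand, excessivity of $G^k_\eta$ and Proposition~\ref{prop6} give $u(\eta)=H_VG^k_\eta(y)\le G^k(\eta,y)=G^k_y(\eta)$, i.e.\ $u\le G^k_y$. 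Applying $H_{V_n}$ yields $u=H_{V_n}u\le H_{V_n}G^k_y$, and letting $n\to\infty$ gives the one-sided inequality $H_VG^k_x(y)=u(x)\le H_VG^k_y(x)$; interchanging $x$ and $y$ then forces equality. If you want to keep your outline, you must either supply a complete proof of the symmetry of $H_{V_n}G^k$ or replace that step by an argument of this one-sided type.
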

\begin{proof} Let $x,y\in\R^d\setminus \partial V$. If $y\in \overline V^c$, then $H_VG^k_x(y)=G^k(x,y)$. Hence, it follows from~(\ref{lemme2}) that $H_VG^k_x(y)=H_VG^k_y(x).$ Now, assume that  $y\in V$. Let $(V_n)_{n\geq1}$  be as in~(\ref{suitee}). Let us consider the function~$u$ defined for every $\eta\in \R^d$ by   $u(\eta):=H_VG^k_\eta(y)$.
Then for every $n\geq 1$ and every $\eta\in V_n$,
$$H_{V_n}u(\eta)=\int_{\partial V_n}\int_{\partial V} G^k(\xi,z)H_V(y,d\xi)H_{V_n}(\eta,dz)=\int_{\partial V} H_{V_n}G^k_\xi(\eta)H_V(y,d\xi).$$
By~(\ref{lemme2}), for every $\xi\in \partial V$,  $H_{V_n}G^k_\xi=G^k_\xi$ on~$\R^d$. Consequently, $H_{V_n}u=u$ on $V_n$ and then, using~(\ref{equ1})
\begin{equation}\label{equ2}
H_{V_n}u=u\quad  \mbox{on }\R^d.
\end{equation}
 Since  $G^k_y$ is excessive we then deduce from Proposition~\ref{prop6} that $u\leq G^k_y$ on~$\R^d$ which implies that for every $n\geq 1$, $H_{V_n}u\leq H_{V_n}G^k_y$ on~$\R^d$. Then, using~(\ref{equ2}), we get $u\leq H_{V_n}G^k_y$.  Letting $n$ tends to $\infty$ we obtain by the above lemma $u\leq H_VG^k_y$ on~$\R^d$. In particular,
$$u(x)=H_VG^k_x(y)\leq H_VG^k_y(x).$$
 Finally, interchange $x$ and $y$ to derive equality.
  \end{proof}

 \begin{proposition} For every continuous function $f\in\partial V$ the function $H_Vf$ is $\Delta_k$-harmonic on $V$.
 \end{proposition}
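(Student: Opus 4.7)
Let $u := H_V f$. Since $V$ is $W$-invariant and $f \in C(\partial V) \subset \mathcal{B}_b(\partial V)$, equation~(\ref{eq16}) of Corollary~\ref{coro1} gives $u \in C(V)$; moreover, by~(\ref{eq15}), $H_U u = H_U H_V f = H_V f = u$ on $\R^d$ for every $U \Subset V$. The plan is to upgrade this reproducing property to $\Delta_k$-harmonicity by approximating $u$ on $V$ by functions that are manifestly $\Delta_k$-harmonic on $V$.

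Concretely, I would approximate $f$ uniformly on $\partial V$ by restrictions of Green potentials whose sources lie outside $\overline V$, i.e.\ find $(h_n) \subset C_c(\overline V^c)$ such that $f_n := (G^k h_n)|_{\partial V} \to f$ uniformly on $\partial V$. This density claim is the main technical point and is what I expect to be the hardest step. I anticipate it following either from a Keldy\v{s}-type approximation theorem adapted to the Dunkl setting, or from the fact that the family $\{G^k(\cdot,y)|_{\partial V} : y \in \overline V^c\}$ separates points of $\partial V$ (clear from the bound~(\ref{eq8})) combined with an appropriate potential-theoretic density argument.

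Granted such an approximation, set $u_n := G^k h_n$. By Proposition~\ref{prop7}, $u_n \in C_0(\R^d)$; since $h_n$ is bounded Borel with compact support and vanishes on the $W$-invariant set $V$, the corollary following Theorem~\ref{distr} (see~(\ref{cor2})) together with Proposition~\ref{prop1} gives that $u_n$ is $\Delta_k$-harmonic on $V$. Using Fubini together with the preceding lemma, which yields $H_V G^k_y = G^k_y$ for every $y \in \overline V^c$, we compute
$$H_V u_n(x) = \int h_n(y) w_k(y) H_V G^k_y(x)\,dy = \int h_n(y) w_k(y) G^k(x,y)\,dy = u_n(x)$$
for every $x \in V$, so $u_n = H_V f_n$ on $V$.

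Finally, the bound $|H_V f_n(x) - H_V f(x)| \leq \|f_n - f\|_{\infty,\partial V}$ gives $u_n \to u$ uniformly on $V$. Since each $u_n$ is smooth and $\Delta_k$-harmonic on $V$, the symmetry identity~(\ref{ti}) gives $\int u_n \Delta_k \varphi\, w_k\, dx = 0$ for every $\varphi \in C_c^\infty(V)$; passing to the limit on the compact support of $\varphi$ yields $\int u\, \Delta_k \varphi\, w_k\, dx = 0$ for every such $\varphi$. The hypoellipticity of $\Delta_k$ on $W$-invariant open sets then forces $u \in C^\infty(V)$, and~(\ref{ti}) applied in the reverse direction gives $\Delta_k u = 0$ classically on $V$. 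Thus $u$ is $\Delta_k$-harmonic on $V$.
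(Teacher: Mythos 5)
Your overall architecture (approximate $f$ on $\partial V$ by traces of functions that are manifestly $\Delta_k$-harmonic on $V$ and reproduced by $H_V$, then pass to the limit in the weak formulation and invoke hypoellipticity) is sound, and steps 4--7 of your argument are correct as written. But the whole proof rests on the density claim that you yourself flag as the hardest step and then do not prove: that every $f\in C(\partial V)$ is a uniform limit of restrictions $(G^k h_n)|_{\partial V}$ with $h_n\in C_c(\overline V^{\,c})$. Neither of your two suggested routes closes this. The Stone--Weierstrass route fails because the functions $G^k h$ form a \emph{linear space}, not an algebra; separation of points by $\{G^k(\cdot,y)|_{\partial V}: y\in\overline V^{\,c}\}$ would give density of the generated algebra, which is of no use here. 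Density of the linear span is, by Hahn--Banach, equivalent to the statement that a signed measure $\mu$ on $\partial V$ with $G^k\mu=0$ on $\overline V^{\,c}$ must vanish --- a Keldy\v{s}-type uniqueness statement that is genuinely nontrivial even classically and is nowhere available in the Dunkl setting of this paper. As it stands, this is a gap, not a routine verification.

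The paper sidesteps the issue entirely by exploiting the identity $\varphi=-G^k(\Delta_k\varphi)$ from~(\ref{eq9}): \emph{every} $\varphi\in C^\infty_c(\R^d)$ is already a Green potential, so one may approximate $f$ on $\partial V$ by arbitrary smooth compactly supported functions (trivial). The price is that the source $-\Delta_k\varphi$ need not vanish on $V$, so $G^k(-\Delta_k\varphi)$ is not harmonic there; the paper compensates by proving the mean-value identity $M_{x,t}(H_VG^k_y)=H_VG^k_y(x)$ for \emph{all} $y\notin\partial V$, including $y\in V$, using the symmetry $H_VG^k_y(x)=H_VG^k_x(y)$ of~(\ref{symetri}) together with Proposition~\ref{prop112}, and then integrates in $y$ and concludes via the mean-value characterization of Proposition~\ref{prop1}. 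If you want to salvage your route, the cleanest fix is to adopt this device: replace your $G^k h_n$ with $\varphi_n=G^k(-\Delta_k\varphi_n)$ for smooth $\varphi_n\to f$ on $\partial V$, and replace ``$h_n$ vanishes on $V$'' by the symmetry argument for $H_VG^k_y$ with $y\in V$.
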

 \begin{proof} Let $x\in V$ and $t>0$ such that $B(x,t)\Subset V$. In virtue of proposition~\ref{prop1} it suffices to prove that $M_{x,t}(H_Vf)=H_Vf(x)$. First we claim that
   \begin{equation}\label{equ3}
   M_{x,t}(H_VG^k_y)=H_VG^k_y(x)\quad\mbox{for every }y\in \R^d\setminus \partial V.
   \end{equation}
    Indeed, if $y\in \overline V^c$ then by~(\ref{lemme2}), $H_VG^k_y=G^k_y$ and so, in view of Proposition~\ref{prop112}, we get $M_{x,t}(H_VG^k_y)=H_VG^k_y(x)$. Assume now that $y\in V$. Then, in view of~(\ref{symetri}),
 \begin{eqnarray*}
 M_{x,t}(H_VG^k_y)=\int_{V} H_VG^k_y(z)d\sigma_{x,t}^k(z)&=& \int_{V} H_VG^k_z(y)d\sigma_{x,t}^k(z)\\
&=&\int_{\partial V}\int_{ V}G^k(\xi,z)d\sigma_{x,t}^k(z)H_V(y,d\xi)\\
&=&\int_{\partial V}M_{x,t}(G^k_{\xi})H_D(y,d\xi).
\end{eqnarray*}
By Proposition~\ref{prop112}, for every  $\xi\in \partial V$, $M_{x,t} (G^k_\xi)=G^k_\xi(x)$. This leads to  $M_{x,t}(H_VG^k_y)=H_VG^k_y(x)$ and proves the claim.
An immediate consequence of~(\ref{equ3}) together~(\ref{eq3}) is  that for every $\varphi\in C^\infty_c(\R^d)$,
\begin{equation}\label{equ4}
M_{x,t}(H_V(G^k\varphi))=H_V(G^k\varphi)(x).
\end{equation}
Let $(\varphi_n)_n\subset C^\infty_c(\R^d)$  be a sequence converging to $f$ on $\partial V$. Using~(\ref{eq9}) we write $\varphi_n=G^k(-\Delta_k\varphi_n)$ for every $n$  and then it follows from~(\ref{equ4})  that
$$M_{x,t}(H_V(\varphi_n))=H_V(\varphi_n)(x).$$
Finally, let $n$ tends to infinity to finish the proof.
 \end{proof}

The following result is an immediate consequence of the above proposition and Proposition~\ref{theorem}.
  \begin{corollary}
Let $f\in C(V)$. Then $H_Uf=f$   for every  $U\Subset V$ if and only if $f$ is $\Delta_k$-harmonic on~$V$.
\end{corollary}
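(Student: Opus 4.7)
The plan is to handle the two implications separately, matching them to the two results cited.

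The $(\Leftarrow)$ direction is literally Proposition~\ref{theorem}: if $f\in C(V)$ is $\Delta_k$-harmonic then $H_Uf=f$ for every $U\Subset V$. So I only have to organize the $(\Rightarrow)$ direction.

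For $(\Rightarrow)$, I would argue locally. Fix an arbitrary $x_0\in V$. Because $V$ is $W$-invariant and the elements of $W$ act as isometries, I can pick $r>0$ small enough that $B(wx_0,r)\subset V$ for every $w\in W$, and then set
$$U:=\bigcup_{w\in W}B(wx_0,r).$$
This $U$ is an open $W$-invariant set with $x_0\in U\Subset V$. Now I invoke the hypothesis $H_Uf=f$. By Proposition~\ref{prop5}, the harmonic measure $H_U(x,\cdot)$ for $x\in U$ is supported in $\partial U\subset V$, so $H_Uf$ is computed purely from the restriction $f|_{\partial U}$, which lies in $C(\partial U)$ since $f\in C(V)$. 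Thus on $U$ we have $H_Uf = H_U(f|_{\partial U})$.

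Next, I apply the preceding proposition (``for every continuous function $f$ on $\partial V$ the function $H_Vf$ is $\Delta_k$-harmonic on $V$'') with the bounded $W$-invariant open set $U$ in place of $V$ and with boundary datum $f|_{\partial U}$. This gives that $H_U(f|_{\partial U})$ is $\Delta_k$-harmonic on $U$. Combining, $f=H_Uf=H_U(f|_{\partial U})$ is $\Delta_k$-harmonic on a neighborhood $U$ of $x_0$, and since $x_0\in V$ was arbitrary, $f$ is $\Delta_k$-harmonic on all of $V$.

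The only point that needs any care is the construction of the $W$-invariant neighborhood $U\Subset V$ of the test point $x_0$, which uses $W$-invariance of $V$ together with the fact that $W$ is a finite group of isometries; no further estimates are required, since the heavy lifting (continuity of $H_U$ on $U$, support of $H_U$ on $\partial U$ in the $W$-invariant case, and harmonicity of $H_U$ of continuous boundary data) has already been developed in Corollary~\ref{coro1}, Proposition~\ref{prop5}, and the preceding proposition.
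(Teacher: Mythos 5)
Your proof is correct and follows essentially the same route as the paper, which simply declares the corollary ``an immediate consequence'' of Proposition~\ref{theorem} (the $\Leftarrow$ direction) and the preceding proposition on harmonicity of $H_Vf$ for continuous boundary data (the $\Rightarrow$ direction, localized to a small $W$-invariant $U\Subset V$ exactly as you do). Your explicit construction of the $W$-invariant neighborhood $\bigcup_{w\in W}B(wx_0,r)$ and the appeal to Proposition~\ref{prop5} just fill in the details the paper leaves implicit.
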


\begin{theorem}\label{thpbd} Assume that $V$ is regular. Then for every   $f\in~C(\partial V)$, the function $H_Vf$ is  the unique solution $u\in C^2(V)\cap C(\overline V)$ to the Dirichlet problem
$$\left\{\begin{array}{rcl}
\Delta_ku&=&0 \quad\textrm{ on~}V,\\
u&=&f\quad\textrm{ on~}\partial V.
\end{array}\right.$$
\end{theorem}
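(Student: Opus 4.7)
The plan has two parts: establish existence by showing $u := H_V f$ satisfies all the required conditions, then deduce uniqueness from the minimum principle of Section~4.

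For existence, the crucial leverage comes from $W$-invariance of $V$, which via Proposition~\ref{prop5} concentrates the harmonic measure $H_V(x,\cdot)$ on $\partial V$ for every $x\in V$. First, apply Tietze's extension theorem to produce $\tilde f\in C_c(\R^d)$ with $\tilde f|_{\partial V}=f$. Because $\text{supp}\, H_V(x,\cdot)\subset\partial V$, we get $H_V f(x)=H_V\tilde f(x)$ for $x\in V$, and both agree with $f$ at points of $\partial V$ by~(\ref{equ1}). The preceding proposition then tells us $H_V\tilde f$ is $\Delta_k$-harmonic on $V$, which is exactly the statement $u\in C^2(V)$ together with $\Delta_k u=0$ on $V$. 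Continuity of $u$ on $V$ is Corollary~\ref{coro1}. Finally, for any $z\in\partial V$ and any sequence $(x_n)\subset V$ with $x_n\to z$, the regularity of $V$ applied to the test function $\tilde f\in C_c(\R^d)$ yields
\[
u(x_n)=H_V\tilde f(x_n)\longrightarrow \tilde f(z)=f(z)=u(z),
\]
so $u\in C(\overline V)$ with $u|_{\partial V}=f$ as required.

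For uniqueness, suppose $u_1,u_2\in C^2(V)\cap C(\overline V)$ both solve the Dirichlet problem and set $w:=u_1-u_2$. Then $w\in C(\overline V)$, $\Delta_k w=0$ on $V$, and $w|_{\partial V}=0$. By Proposition~\ref{prop1}, $M_{x,t}(w)=w(x)$ for every $x\in V$ and $t>0$ with $B(x,t)\Subset V$; in particular both the hypothesis $(b)$ of Theorem~\ref{prop3} and its analogue for $-w$ hold with equality. Since $V$ is $W$-invariant and bounded and $\liminf_{x\to z}(\pm w)(x)=0$ for every $z\in\partial V$, the minimum principle gives $w\geq 0$ and $-w\geq 0$ on $V$, hence $w\equiv 0$ and $u_1=u_2$.

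The main obstacle is the boundary-behavior step in existence: the notion of regularity is formulated only for test functions in $C_c(\R^d)$, yet we need continuity of $H_V f$ up to $\partial V$ for an arbitrary $f\in C(\partial V)$. The bridge is precisely Proposition~\ref{prop5}, whose support property, available because $V$ is $W$-invariant, makes $H_V f$ and $H_V\tilde f$ indistinguishable on $\overline V$ and thereby reduces the boundary-limit question to the defining condition of a regular point. Once existence is in hand, uniqueness is a clean direct application of the minimum principle already proved in Section~4.
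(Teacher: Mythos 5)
Your proof is correct and takes essentially the same route as the paper: harmonicity of $H_Vf$ on $V$ via the preceding proposition, boundary continuity from the regularity of $V$ (you make explicit the Tietze extension and the reduction through $\operatorname{supp}H_V(x,\cdot)\subset\partial V$ that the paper leaves implicit), and uniqueness by applying Proposition~\ref{prop1} and the minimum principle of Theorem~\ref{prop3} to $\pm(u_1-u_2)$.
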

 \begin{proof} The function $H_Vf$ is $\Delta_k$-harmonic on $V$ by the above corollary. Moreover, $H_Vf=f$ on $\partial V$ and  $H_Vf$ is continuous  on $\partial V$ since $V$ is regular. To prove the uniqueness, let $u,v\in C^2(V)\cap C(\overline V)$ be two solutions to the Dirichlet problem. Then the function~ $h:=u-v$ satisfies $h\in C^2(V)\cap C(\overline V)$,  $\Delta_kh=0$ on~$V$    and  $h=0$ on~$\partial V.$ Hence for every $x\in\R^d$ and every $t>0$ such that $\overline B(x,t)\subset V$, $M_{x,t}(h)=h(x)$ by Proposition~\ref{prop1}.
  Applying Proposition~\ref{prop3} to   $h$ and $-h$, we get $h=0$~on~$V$. This  finishes the proof.
 \end{proof}

\end{document}